\newtheorem{theorem}{Theorem}
\newtheorem{proposition}{Proposition}
\newtheorem{remn}{Remark}
\newtheorem{lem}{Lemma}
\begin{document}
\sloppy
\title{Morse-Smale 3-diffeomorphisms with saddles of the same unstable manifold dimension}
\date{}
\author{E.M. Osenkov, O.V. Pochinka}

\maketitle

\begin{abstract}
In this paper, we consider a class of Morse-Smale diffeomorphisms defined on a closed 3-manifold (non-necessarily orientable) under the assumption that all their saddle points have the same dimension of the unstable manifolds. The simplest example of such diffeomorphisms is the well-known ``source-sink'' or ``north pole - south pole'' diffeomorphism, whose non-wandering set consists of exactly one source and one sink. Such systems, as Reeb showed back in 1946, can be realized only on the sphere. We generalize his result, namely, we show that diffeomorphisms from the considered class also can be defined only on the 3-sphere.
\end{abstract}

{\bf Keywords:} 
Morse-Smale system, topology of the ambient manifold

{\bf MSC2010:} 37C15

\section*{INTRODUCTION AND FORMULATION OF THE RESULTS}

The class of dynamical systems, introduced by S. Smale in 1960 \cite{smale1960morse} and known today as Morse-Smale systems, played not the least role in the formation of the modern dynamical systems theory. The study of these systems remains an important part of it because they form a class of structurally stable systems which, in addition, have zero topological entropy \cite{PALIS1969385}, \cite{palis2000structural}, \cite{robinson1998dynamical}, that makes them in this sense by ``the simplest'' structural stable systems. 

A close relation of the Morse-Smale diffeomorphisms ($MS$\!-diffeomorphisms) with the topology of the ambient manifold allows us to realize various topological effects in the dynamics of such systems. The classical example demonstrating such a relation is systems with exactly two points of extreme Morse indices. In this case, it follows from Reeb's theorem \cite{reeb},  the ambient manifold is homeomorphic to the sphere.

Another brilliant illustration of researched relations is the decomposition of an orientable 3-manifold into a connected sum of $\mathbb S^2 \times \mathbb S^1$ whose  number of summands is completely determined by a structure of the non-wandering set of an MS-diffeomorphism without heteroclinic curves defined on it. This result was obtained in papers by H. Bonatti, V. Grines, and V. Medvedev \cite{Bonatti2002}, \cite{Grines2003} and is based on the breakthrough result about the existence of a tame neighborhood of a 2-sphere with one point of wildness. The ideas that authors put into their proofs have been extremely helpful in our research.

The present paper is a straightforward generalization of Reeb's Theorem on the following class of difeomorphisms. 
Let $f$ be an $MS$\!-diffeomorphism defined on a closed connected 3-manifold $M^3$ and all its saddle points have the same dimension of their unstable manifolds. Denote this class as $\mathcal G.$ Then we can formulate the main result of this work.
	
\begin{theorem}\label{result} Any closed connected $3$-manifold $M^3$, admitting a diffeomorphism $f \in \mathcal G$, is homeomorphic to the 3-sphere.
\end{theorem}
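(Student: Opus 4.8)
The plan is to exploit the gradient-like structure that the hypothesis $f \in \mathcal{G}$ imposes on the dynamics, and reconstruct $M^3$ by a filtration argument. Let $r$ be the common dimension of the unstable manifolds of the saddles; by passing to $f^{-1}$ if necessary we may assume $r = 1$, so that every saddle has a one-dimensional unstable manifold and a two-dimensional stable manifold. First I would recall that an $MS$-diffeomorphism with no saddles at all is the "source–sink" system, which by Reeb's theorem forces $M^3 \cong \mathbb{S}^3$; this is the base case. So assume there is at least one saddle. The key structural observation is that, since all saddles share the same unstable dimension, there can be no heteroclinic intersections between saddles: a heteroclinic point would lie in $W^u(\sigma_1) \cap W^s(\sigma_2)$ with $\dim W^u(\sigma_1) = 1$ and $\dim W^s(\sigma_2) = 2$ inside a $3$-manifold, and transversality plus the behaviour of the invariant manifolds under iteration would force $\sigma_1$ and $\sigma_2$ to have different unstable dimensions — contradiction. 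Hence the diffeomorphism is in a very rigid "no heteroclinic curves" situation, putting us squarely in the setting of the Bonatti–Grines–Medvedev circle of ideas cited above.

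**Next I would analyse a single saddle and its closure.** For a saddle $\sigma$ with $\dim W^u(\sigma) = 1$, the unstable manifold $W^u(\sigma)$ is an open arc whose two ends, by the absence of heteroclinic intersections and the Morse–Smale property, must limit onto sink points; its closure $\mathrm{cl}(W^u(\sigma))$ is thus an arc joining (one or two) sinks through $\sigma$. Dually, $W^s(\sigma)$ is an open $2$-disc whose closure is obtained by adding source points. The heart of the argument is to understand the union $A = W^s(\sigma) \cup (\text{the two sources that compactify it})$: using the local product structure near $\sigma$ and the linearising neighbourhoods near the sources, one shows $\mathrm{cl}(W^s(\sigma))$ is a (possibly wildly) embedded $2$-sphere in $M^3$. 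Here is where I would invoke the breakthrough result mentioned in the introduction — the existence of a tame neighbourhood of a $2$-sphere with a single point of wildness — to conclude that each such sphere bounds, on at least one side, a $3$-ball.

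**The main technical engine is then a reduction/induction on the number of saddles.** Given the collection of stable-manifold spheres $\{\mathrm{cl}(W^s(\sigma_i))\}$, which are pairwise disjoint off the source/sink set, I would cut $M^3$ along one such sphere $S = \mathrm{cl}(W^s(\sigma))$. By the ball-bounding property, one of the two pieces is a $3$-ball $B$; the dynamics lets us "cap off" the other piece $M' = \mathrm{cl}(M^3 \setminus B)$ by a $3$-ball to obtain a closed $3$-manifold $\widehat{M}$ carrying an $MS$-diffeomorphism $\widehat{f} \in \mathcal{G}$ with strictly fewer saddles (the saddle $\sigma$ and one adjacent sink or source having been absorbed), while $M^3 = \widehat{M} \# (\text{something that is a sphere because } B \text{ is a ball}) = \widehat{M}$. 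By the induction hypothesis $\widehat{M} \cong \mathbb{S}^3$, and since capping/uncapping by balls does not change the homeomorphism type, $M^3 \cong \mathbb{S}^3$. I would organise the bookkeeping via a filtration of $M^3$ adapted to $f$: the "attractor" sublevel sets (unions of sinks with small neighbourhoods, thickened along unstable arcs of saddles) and the dual "repeller" sets, whose common boundary is a disjoint union of $2$-spheres, each of which must be standard by the tameness result.

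**The step I expect to be the main obstacle** is precisely the wildness issue: a priori the $2$-spheres $\mathrm{cl}(W^s(\sigma))$ can be wildly embedded (they are only guaranteed to be locally flat away from the source points, where the compactification is performed), so one cannot naively apply the Schoenflies theorem to conclude they bound balls. Controlling this requires the full strength of the cited result on tame neighbourhoods of $2$-spheres with one wild point, together with a careful verification that each of our spheres has at most one such bad point and that the dynamical capping construction produces a genuine $MS$-diffeomorphism in $\mathcal{G}$ on the surgered manifold. A secondary subtlety, needed to keep the induction honest, is the non-orientable case: one must ensure the ball-bounding and capping operations are available without assuming orientability, which is where the argument must depart from the orientable treatment in \cite{Bonatti2002}, \cite{Grines2003}.
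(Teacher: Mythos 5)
Your overall strategy coincides with the paper's: induct on the number of saddles, cut $M^3$ along the $2$-sphere obtained as the closure of a two-dimensional invariant manifold of a saddle, and use the Bonatti--Grines--Medvedev--Pecou tame-neighbourhood result to control the cut. However, the proposal has a genuine gap at its very first structural claim. It is \emph{not} true that equal unstable dimensions rule out all heteroclinic intersections: with your normalization $r=1$, a transversal intersection $W^u_{\sigma_1}\cap W^s_{\sigma_2}$ has dimension $1+2-3=0$, i.e.\ it consists of heteroclinic \emph{points}, and such intersections between saddles of the same Morse index are perfectly compatible with the Morse--Smale conditions and with membership in $\mathcal G$ (only heteroclinic \emph{curves} are excluded by the equal-index hypothesis). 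Consequently you cannot assert that every $\mathrm{cl}(W^s(\sigma_i))$ is a sphere, nor that these closures are pairwise disjoint off the node set. The paper instead proves (Lemma~\ref{sigma}) that \emph{one} saddle --- the minimal one in a dynamical (Smale-order compatible) numbering --- has a heteroclinic-free two-dimensional separatrix, and the induction peels off exactly that one saddle at each step. Your argument needs this replacement to get off the ground; also note that by Proposition~\ref{sep_sph} the closure of the two-dimensional separatrix is compactified by a \emph{single} node, not two.

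Two further steps do not follow as written. First, you tacitly assume the sphere $S=\mathrm{cl}(W^s(\sigma))$ separates $M^3$ (``one of the two pieces''); in a general closed $3$-manifold an embedded $2$-sphere need not separate, and the paper devotes Lemma~\ref{msi} to this point, passing to the orbit space $\hat\ell^s_\omega\cong\mathbb S^2\times\mathbb S^1$ and using that an essential tame torus there bounds a solid torus (Proposition~\ref{T2}). Second, the assertion that $S$ ``bounds, on at least one side, a $3$-ball'' does not follow from the tame-neighbourhood result, which only yields a collar $K\cong\mathbb S^2\times[0,1]$; proving that one side is a ball is essentially as strong as the theorem itself. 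The paper avoids this entirely: it caps \emph{both} complementary pieces with copies of $\mathbb R^3$ carrying the linear sink, obtaining $M^3=M^\sigma_1\# M^\sigma_2$ with each $M^\sigma_i$ a closed $3$-manifold admitting a diffeomorphism of class $\mathcal G$ with fewer saddles, and applies the induction hypothesis to both summands. You should restructure your reduction step along these lines rather than trying to exhibit a ball directly.
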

{\bf Acknowledgement.} The work was supported by the Russian Science Foundation (Project No. 23-71-30008).

\section{AUXILIARY INFORMATION AND FACTS}

This section introduces basic concepts and facts from topology and dynamical systems theory.

\subsection{Some topological facts}

Let $X$, $Y$ be topological spaces, $A \subset X$ and $B \subset Y$ are their subsets and $g:A \to B$ is a homeomorphism. Let $\sim$ be the minimal equivalence relation on $X\sqcup Y$ for which $a\sim g(a)$ for all $a\in A$. The factor space for this equivalence relation is said to
be obtained by gluing the space $Y$ to the space $X$ by the map $g$, written $X \cup_g Y$.

Let $X$, $Y$ be compact $n$-manifolds,  $D_1 \subset X$, $D_2 \subset Y$ be subsets homeomorphic to $\mathbb D^n$,  $h_1:\mathbb D^n \to D_1$, $h_2:\mathbb D^n \to D_2$ be the corresponding homeomorphisms and  $g: \partial D_1 \to \partial D_2$ be a homeomorphism such that the map $\left. h^{-1}_2 g h_1 \right|_{\partial \mathbb D^n} : \mathbb S^{n-1} \to \mathbb S^{n-1}$ reverses orientation. Then the space $X \# Y = (X \setminus \mathrm{int}\, D_1) \cup_g (Y\setminus \mathrm{int}\, D_2)$ is called {\it the connected sum of $X$ and $Y$.}

If $X \subset Y$ then the map $i_X:X \to Y$ such that $i_X(x) = x$ for all $x \in X$, is called {\it the inclusion map of X into Y}.

Let $X$ and $Y$ be $C^r$-manifolds. Denote by $C^r(X,Y)$ the set of all $C^r$-maps $\lambda:X\to Y$. A map $\lambda:X\to Y$ is said to be {a $C^r$-embedding} if it is a $C^r$-diffeomorphism onto the subspace $\lambda(X)$.
$C^0$-embedding is also called {\it a topological embedding}.

A topological embedding $\lambda: X \to Y$ of an $m$-manifold $X$ into an $n$-manifold $Y$ $(m \leq n)$ is said to be {\it locally flat at the point $\lambda(x)$, $x \in X$,} if the point $\lambda(x)$ is in the domain of such a chart $(U,\psi)$ of the manifold $Y$ that $\psi(U \cap \lambda(X)) = \mathbb R^m$, here $\mathbb R^m \subset \mathbb R^n$ is the set of points for which the last $n-m$ coordinates equal to $0$ or $\psi(U \cap \lambda(X)) = \mathbb R^m_+$, here $\mathbb R^m_+ \subset \mathbb R^m$ is the set of points with the non-negative last coordinate.

An embedding $\lambda$ is said to be {\it tame} and the manifold $X$ is said to be {\it tamely embedded} if $\lambda$ is locally flat at every point $x \in X$. Otherwise the embedding $\lambda$ is said to be {\it wild} and the manifold $X$ is said to be {\it wildly embedded}. A point $\lambda(x)$ which is not locally flat, is said to be {\it the point of wildness.}

\begin{proposition}[\cite{Bonatti2000}, Theorem 4]\label{T2}
    Let $T$ be a two-dimensional torus tamely embedded in the manifold $\mathbb S^2 \times \mathbb S^1$ in such a way that $i_{T*}(\pi_1(T))\neq 0$. Then $T$ bounds in $\mathbb S^2 \times \mathbb S^1$ a solid torus.
\end{proposition}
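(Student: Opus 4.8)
The plan is to turn the torus into a $2$-sphere by compression and then exploit how restricted $2$-spheres are in $\mathbb{S}^2\times\mathbb{S}^1$. Since $T$ is tame I would work in the PL category, where Dehn's Lemma and the Loop Theorem apply. A torus in the orientable manifold $\mathbb{S}^2\times\mathbb{S}^1$ is two-sided, so if it were incompressible the Loop Theorem would make $i_{T*}\colon\pi_1(T)\to\pi_1(\mathbb{S}^2\times\mathbb{S}^1)$ injective; this is impossible because $\pi_1(T)\cong\mathbb{Z}^2$ does not embed into $\pi_1(\mathbb{S}^2\times\mathbb{S}^1)\cong\mathbb{Z}$. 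Hence $T$ admits a compressing disk $D$, meeting $T$ exactly along an essential simple closed curve $c=\partial D$; such a curve is automatically primitive in $\pi_1(T)$ and satisfies $i_{T*}[c]=0$, since it bounds $D$. Cutting out an annular neighbourhood of $c$ in $T$ and gluing in two parallel copies of $D$ produces an embedded $2$-sphere $S$ (the genus drops by one, so the result is connected of genus $0$).

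Next I would invoke the structure of $2$-spheres in $\mathbb{S}^2\times\mathbb{S}^1$: since this manifold is prime, every separating $2$-sphere bounds a $3$-ball, while a non-separating one has simply connected complement (a van Kampen argument on the decomposition obtained by cutting along $S$ forces $\pi_1$ of the complement to be trivial). I want to rule out the non-separating case, and this is exactly where the hypothesis $i_{T*}(\pi_1(T))\neq 0$ enters. If $S$ were non-separating, then undoing the compression would present $T$ as $S$ tubed to itself along an arc $\gamma$ (the co-core of the compression); for the outcome to be a torus rather than a sphere, the two feet of $\gamma$ must lie on the \emph{same} side of $S$, which falls out of the construction since $\gamma$ runs alongside the slab $D\times[-1,1]$ and that slab sits on one side of $T$. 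In the simply connected complement $\mathbb{S}^2\times\mathbb{S}^1\setminus N(S)$, an arc with both endpoints on one boundary sphere is homotopic rel endpoints into that sphere; hence a curve on $T$ crossing the tube once (which together with $c$ generates $\pi_1(T)$) is, under $i_{T*}$, represented by a loop lying in a $2$-sphere, so it is null-homotopic. Then $i_{T*}(\pi_1(T))=0$, contradicting the hypothesis.

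Therefore $S$ bounds a $3$-ball $B$. Undoing the compression once more, $T$ is $S=\partial B$ tubed to itself along an arc $\gamma$ whose feet lie on one side of $S$; since $\gamma$ meets $S$ only at its endpoints, $\gamma$ lies entirely in $B$ or entirely in $\mathbb{S}^2\times\mathbb{S}^1\setminus B$. In the first case $T$ bounds $B\setminus\mathrm{int}\,N(\gamma)$ — a $3$-ball with a trench dug between two boundary points, i.e. a solid torus; in the second case $T$ bounds $B\cup N(\gamma)$ — a $3$-ball with a $1$-handle attached, again a solid torus. Either way $T$ bounds a solid torus, which is the assertion.

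I expect the main obstacle to be the non-separating case: one must correctly track which side of $S$ the reconstructing arc lies on and verify the homotopy that kills the image of $\pi_1(T)$, and one must cite (or reprove) the structural facts about $2$-spheres in $\mathbb{S}^2\times\mathbb{S}^1$ — precisely the facts that make ``separating $\Rightarrow$ bounds a ball'' hold here and fail in a non-prime ambient manifold. The compressibility of $T$, the surgery, and the final handlebody identifications are routine once these inputs are in place.
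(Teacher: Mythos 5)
The paper offers no proof of this statement --- it is imported verbatim as Theorem 4 of the cited Bonatti--Grines paper --- so there is no in-house argument to measure yours against; I can only assess your reconstruction on its own terms. The overall route (compress the torus, analyze the resulting sphere, undo the compression) is the natural one, and most of it is sound: compressibility via the Loop Theorem, the surgery to a sphere $S$, the separating/non-separating dichotomy, and the exclusion of the non-separating case are all correct. One small slip there: your stated reason that the two feet of $\gamma$ lie on the same side of $S$ --- ``for the outcome to be a torus rather than a sphere'' --- is not the right dichotomy, since tubing a single sphere to itself always produces a genus-one surface; the true alternative is a Klein bottle, excluded because $T$ is a torus (or, more directly, by tracking the normal field of $T$ across the corner circles $\partial D_{\pm}$, which is what your parenthetical about the slab gestures at). The claim you need is nevertheless true.

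The genuine gap is in the separating case, sub-case $\gamma\subset B$. A $3$-ball with ``a trench dug between two boundary points'' is a solid torus only when the properly embedded arc $\gamma$ is unknotted in $B$. If $\gamma$ is knotted, $B\setminus\mathrm{int}\,N(\gamma)$ is a knotted-arc complement whose fundamental group is a nontrivial knot group rather than $\mathbb{Z}$, and the other complementary region $(M\setminus\mathrm{int}\,B)\cup N(\gamma)$ is a punctured $\mathbb{S}^2\times\mathbb{S}^1$ with a $1$-handle attached, whose fundamental group is free of rank $2$; so such a $T$ bounds a solid torus on neither side, and tori of this kind do exist in $\mathbb{S}^2\times\mathbb{S}^1$. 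What rescues the statement is the hypothesis, which you never invoke in the separating case: if $\gamma\subset B$ then the entire torus $T=(S\setminus(D_{+}\cup D_{-}))\cup A$ lies in the ball $B$, so $i_{T*}(\pi_1(T))$ factors through $\pi_1(B)=1$, contradicting $i_{T*}(\pi_1(T))\neq 0$. Hence this sub-case must be ruled out, not resolved. Only the sub-case $\gamma\subset M\setminus\mathrm{int}\,B$ survives, and there your identification of $B\cup N(\gamma)$ with a ball plus a $1$-handle --- hence a solid torus, independently of any knotting of $\gamma$ --- is correct. With that one correction the argument closes up.
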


\begin{proposition}[\cite{Bonatti2002}, Proposition 0.1]\label{NoWS} Let $M^3$ be a closed connected $3$-manifold and let $\lambda: \mathbb S^2 \to M^3$ be a topological embedding of the $2$-sphere which is smooth everywhere except one point. Let $\Sigma = \lambda(\mathbb S^2)$. Then any neighborhood $V$ of the sphere $\Sigma$ contains a neighborhood $K$ diffeomorphic to $\mathbb S^2 \times [0,1]$.\footnote{This fact was proven in \cite{Bonatti2002} for an orientable manifold $M^3$, but the proof don't use the orientability anywhere. So we can use this result in our case too.}
\end{proposition}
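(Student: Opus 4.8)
\section*{Proof proposal}

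The plan is to localize the entire difficulty at the unique point of wildness $p=\lambda(x_0)$ and then to produce a product neighborhood by trapping the wild ``cap'' of $\Sigma$ between two \emph{tame} spheres, all of it squeezed inside the prescribed neighborhood $V$. Away from $p$ the restriction $\lambda|_{\mathbb S^2\setminus\{x_0\}}$ is a smooth embedding of the open disk $\mathbb S^2\setminus\{x_0\}$, so $\Sigma\setminus\{p\}$ is a smooth surface; being simply connected, it has a trivial normal bundle and hence a smooth bicollar. Thus the only place where $\Sigma$ resists a product structure is an arbitrarily small ball around $p$, and the game is to cut $\Sigma$ by the boundary of such a ball, cap off the smooth piece, and fatten the result.

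First I would fix a small smooth ball $B\subset V$ centred at $p$ and arrange that $\Sigma\cap\partial B$ is a single circle $c$ bounding the wild cap $d_\Sigma=\Sigma\cap B$, so that $A=\Sigma\setminus\mathrm{int}\,d_\Sigma$ is a smooth tamely embedded compact disk with $\partial A=c$. Transversality (Sard) gives that for almost every small $B$ the set $\Sigma\cap\partial B$ is a disjoint union of smooth circles, since it avoids $p$; the reduction to one circle would be carried out by innermost/outermost-circle surgeries, cancelling each inessential circle by pushing $\Sigma$ across the smooth disk it bounds on $\partial B$, and finally isolating the unique circle whose disk on $\Sigma$ contains $p$. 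I expect this step --- taming the ``link'' of the wild point down to a single circle --- to be the main obstacle, precisely because $\Sigma$ is not locally flat at $p$, so no straightening chart is available there; this is exactly the content that elevates the statement to a genuine theorem rather than a routine collaring argument.

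With such a ball in hand I would build two disjoint smooth $2$-spheres $S_-$ and $S_+$, one on each side of $\Sigma$. Outside $B$ take a thin smooth bicollar $A\times[-1,1]\subset V\setminus\mathrm{int}\,B$ of the disk $A$, arranging the two boundary circles $c_-=c\times\{-1\}$ and $c_+=c\times\{+1\}$ of its side annulus $\alpha=c\times[-1,1]$ to lie on $\partial B$ on the two sides of $c$. Inside $B$ the wild cap $d_\Sigma$ is a topological disk properly embedded in the ball, hence it separates $B$ into two components, and near $\partial B$ it is smooth; I would therefore cap $A\times\{\pm1\}$ by smooth disks $D_\pm\subset B$ with $\partial D_\pm=c_\pm$, taken close to $\partial B$ and pushed slightly to the corresponding side of $d_\Sigma$, so that $d_\Sigma$ lies between $D_-$ and $D_+$. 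After smoothing the corners along $c_\pm$, the surfaces $S_\pm=(A\times\{\pm1\})\cup D_\pm$ are disjoint tame $2$-spheres.

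Finally I would identify the region $K$ bounded by $S_-$ and $S_+$ with $\mathbb S^2\times[0,1]$. Cutting $K$ along $\partial B$ splits it into the slab $A\times[-1,1]\cong\mathbb D^3$ and the region inside $B$ between $D_-$ and $D_+$, namely the ball $\hat B\cong\mathbb D^3$ bounded by the smooth sphere $D_-\cup\alpha\cup D_+$ (a sphere in a ball, hence bounding a ball by the Schoenflies theorem) and containing the wild cap $d_\Sigma$. The two balls meet exactly along the annulus $\alpha$ in their boundary spheres, so $K$ is $\hat B$ with a $2$-handle attached along a regular annular neighborhood of an unknotted circle in $\partial\hat B$; consequently $K$ is connected and simply connected (van Kampen across the annulus $\alpha$) with $\partial K=S_-\sqcup S_+$ a pair of $2$-spheres. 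Capping these off by balls yields a closed simply connected $3$-manifold, so $K\cong\mathbb S^2\times[0,1]$, with $\Sigma\subset\mathrm{int}\,K\subset V$. As all the choices --- the ball $B$, the bicollar, and the caps --- can be made arbitrarily small, every neighborhood $V$ of $\Sigma$ contains such a $K$, which is the assertion.
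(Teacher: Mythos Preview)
The paper does not prove this proposition; it is quoted from \cite{Bonatti2002} and used as a black box (the footnote merely records that orientability of $M^3$ plays no role). So there is no in-paper proof to compare against. That said, your outline is essentially the strategy of the cited source: isolate the wild point in a small smooth ball $B$ with $\Sigma\cap\partial B$ a single circle, bicollar the remaining smooth disk $A=\Sigma\setminus\mathrm{int}(\Sigma\cap B)$, cap the two parallel copies $A\times\{\pm1\}$ by smooth disks inside $B$ lying on either side of the wild cap, and recognise the region between the two resulting tame spheres as $\mathbb S^2\times[0,1]$.

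Two points deserve tightening. In the innermost-circle reduction you speak of ``pushing $\Sigma$ across the smooth disk it bounds on $\partial B$''; since $\Sigma$ is the fixed object that the eventual $K$ must contain, it should not move --- isotope $\partial B$ instead, using the bicollar of the smooth part of $\Sigma$ to slide $\partial B$ across each surgery disk and thereby reduce the number of intersection circles. At the end, appealing to the classification of closed simply connected $3$-manifolds is unnecessary (and was unavailable when \cite{Bonatti2002} was written): you have already exhibited $K$ as two $3$-balls $\hat B$ and $A\times[-1,1]$ glued along an annulus $\alpha$ that is standard (i.e.\ the complement of two disjoint disks) in each boundary $2$-sphere, and the model decomposition $\mathbb S^2\times[0,1]=(D_+\times[0,1])\cup_{\partial D_+\times[0,1]}(D_-\times[0,1])$, with $D_\pm$ the two hemispheres of $\mathbb S^2$, then yields $K\cong\mathbb S^2\times[0,1]$ directly.
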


Let us assume that the empty set and only the empty set {\it  has a dimension $-1$} $(\dim \varnothing = -1)$. The separable metric space $X$ {\it has a dimension $\leq n$ $(\dim X \leq n)$} if any neighborhood $V_p$ of a point $p \in X$ contains a neighborhood $U_p$ such that $\dim (\partial U_p) \leq n-1$. The space $X$ {\it has a dimension $n$ $(\dim X = n)$} if the statement $\dim X \leq n$ is true and the statement $\dim X \leq n-1$ is false. 

It is said that a subset $D$ of a connected space $X$ {\it divides it} if the space $X\setminus D$ is disconnected.
\begin{proposition}[\cite{HurWall_1948}, Corollary 1, p.48]\label{n-2}
    Any connected $n$-manifold cannot be divided by a subset of the  dimension $\leq n-2$.
\end{proposition}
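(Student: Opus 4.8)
The statement is a classical fact of dimension theory, quoted here from Hurewicz--Wallman; I outline how one derives it. The plan is to reduce, by two routine point-set steps, to a statement about $\mathbb R^n$ and then to settle that statement by general position. \emph{First}, reduce to a \emph{closed} $D$: if $D$ divides $M$, write $M\setminus D=A\sqcup B$ with $A,B$ nonempty and relatively open; since $\dim D\le n-2<n=\dim M$, monotonicity of dimension forces $D$ to have empty interior, so $A\cup B$ is dense and $\overline A\cup\overline B=M$. Then $F:=\overline A\cap\overline B$ is closed, contained in $D$ (a point of $F$ lying in $M\setminus D$ would lie in $A$ or $B$, contradicting that these are relatively open and disjoint), has $\dim F\le n-2$, and $M\setminus F=(M\setminus\overline A)\sqcup(M\setminus\overline B)$ is again a disconnection into nonempty open sets; so we may take $D$ closed. \emph{Second}, localize: with $D$ closed, $M=U\sqcup D\sqcup V$ with $U,V$ open and nonempty, and since $M$ is connected and $U\cup V$ is dense, $\overline U\cap\overline V\ne\varnothing$; any $p$ in this intersection lies in neither $U$ nor $V$ (disjoint open sets whose closures both contain $p$), hence $p\in D$. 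In a chart $\varphi\colon B\to\mathbb R^n$ about $p$, the sets $B\cap U$ and $B\cap V$ are nonempty (as $p\in\overline U\cap\overline V$), open, disjoint, and together are $B\setminus D$; so the closed set $C:=\varphi(B\cap D)\subseteq\mathbb R^n$, of dimension $\le n-2$, disconnects $\mathbb R^n$. Thus it suffices to prove: \emph{a closed $C\subseteq\mathbb R^n$ with $\dim C\le n-2$ does not disconnect $\mathbb R^n$.}

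For this I would run the general-position argument of Hurewicz--Wallman: given $a,b\in\mathbb R^n\setminus C$, build a polygonal arc from $a$ to $b$ missing $C$ by subdividing the straight segment $\overline{ab}$ into short pieces lying in small balls and then perturbing the interior breakpoints, one after another, so that every resulting sub-segment is disjoint from $C$. The existence of such perturbations is the content of the dimension-theoretic translation theorem --- if $\dim A+\dim S\le n-1$ then $A$ misses some arbitrarily small translate of the segment $S$ in $\mathbb R^n$ --- invoked with $A=C$ and $\dim S=1$, since $(n-2)+1<n$; keeping the perturbations small keeps the arc bounded and its endpoints at $a,b$. (When $M$ is compact --- the only case we shall need --- the proposition follows at once from Poincar\'{e}--Alexander--Lefschetz duality: with $\mathbb Z_2$-coefficients, $H_0(M,M\setminus D)\cong\check H^{\,n}(D)=0$ and $H_1(M,M\setminus D)\cong\check H^{\,n-1}(D)=0$, because the cohomological dimension of $D$ is $\le\dim D\le n-2$, and the long exact sequence of the pair $(M,M\setminus D)$ then yields $\widetilde H_0(M\setminus D)=0$.)

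All the difficulty is in the Euclidean core, and precisely in the translation theorem --- the one point where the dimension hypothesis is essentially used. It is tempting to treat it as mere transversality (``a $1$-dimensional path can be pushed off an $(n-2)$-dimensional set because $1+(n-2)<n$''), but this is exactly the kind of statement that needs genuine dimension-theory machinery: the ``bad'' perturbations form the image of a product set of covering dimension $\le n-1$ under a Lipschitz map, and Lipschitz --- even smooth --- maps can raise \emph{Hausdorff} dimension, so one cannot close the argument by a naive measure count and must instead work with covering dimension (sum and product theorems, $\varepsilon$-mappings, and so on). This is why the proposition is cited from a dimension-theory reference rather than reproved.
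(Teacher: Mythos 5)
The paper offers no proof of this proposition at all: it is quoted from Hurewicz--Wallman (Corollary~1, p.~48) and used as a black box, so there is no in-paper argument to measure yours against. Your two reduction steps are correct and are exactly what makes the manifold statement follow from the Euclidean one: replacing an arbitrary divider $D$ by the closed set $\overline A\cap\overline B$, then localizing at a point of $\overline U\cap\overline V\subset D$ to produce a closed set of dimension $\le n-2$ disconnecting $\mathbb R^n$. You are also right that the Euclidean core is genuine dimension theory that cannot be dispatched by naive transversality or a measure count (for a totally disconnected closed $C$ of positive Lebesgue measure --- covering dimension $0$ --- the set of translates of a segment meeting $C$ can have positive measure), which is precisely why the authors cite the result rather than reprove it; since you likewise defer the translation theorem to the literature, your sketch is an honest expansion of the citation rather than a full proof, and that is consistent with the paper's treatment. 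One correction: your parenthetical claim that the compact case is ``the only case we shall need'' is false for this paper. In Step~2 of the proof of Lemma~\ref{msi} the proposition is applied to the \emph{open} manifold $\mathcal M_\sigma=M^3\setminus\Sigma_\sigma$ with the one-dimensional set $R$ removed, so the Poincar\'e--Alexander--Lefschetz shortcut as you state it does not cover all of the paper's uses. This is easily repaired --- either run the duality argument with cohomology with compact supports (which still vanishes above the covering dimension), or simply observe that your own localization step reduces every case to $\mathbb R^n$ irrespective of compactness --- but as written the aside misrepresents what the paper needs.
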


\subsection{Morse-Smale diffeomorphisms}
Here and below, we assume that $M^n$ is a closed connected 3-manifold with a metric $d$ and a map $f:M^n \to M^n$ is a diffeomorphism. 

{\it The trajectory} or {\it the orbit} of a point $x \in M^n$ is the set $\mathcal O_x = \{f^m(x), m\in \mathbb Z\}$. 

A set $A \subset M^n$ is said to be {\it $f$-invariant} if $f(A) = A$, that is $A$ consists of hole orbits.

A compact $f$-invariant set $A \subset M^n$ is called {\it an attractor} of the diffeomorphism $f$ if it has a compact neighborhood $U_A$ such that $f(U_A) \subset \mathrm{int}\, U_A$ and $A = \bigcap\limits_{k\geq 0}f^k(U_A)$. The neighborhood $U_A$ in this case is said to be {\it trapping}. {The basin of the attractor $A$} is the set $$W^s_A=\{x\in M^n:\, \lim\limits_{k\to+\infty}d(f^k(x),A)=0\}.$$

{\it A repeller} and its basin are defined as an attractor and its basin for $f^{-1}.$

A point $x \in M^n$ is said to be {\it a wandering} for the diffeomorphism $f$ if there is an open neighborhood $U_x$ of x such that $f^k(U_x) \cap U_x = \varnothing$ for all $k \in \mathbb N$. Otherwise, the point $x$ is said to be {\it a non-wandering}. The set of all non-wandering points is called the {\it non-wandering set} and it will be denoted by $\Omega_f$. The non-wandering set $\Omega_f$ is $f$-invariant and if $\Omega_f$ is finite, then it consists only of {periodic} points, i.e. such points $p \in M^n$ that there exists the natural number $m$ for which $f^m(p)=p$. If this equality is not satisfied for any natural number $k < m$, then $m$ is called {\it the period of a point $p$}, denote it by $m_p$. 

For a periodic point $p$, let us define sets $$W^s_p=\{x\in M^n:\lim\limits_{k\to+\infty}d(f^{km_p}(x),p)=0\}$$ and
	$$W^u_p=\{x\in M^n:\lim\limits_{k\to-\infty}d(f^{km_p}(x),p)=0\},$$
which are called, respectively, {\it stable} and {\it unstable manifolds} of the point $p$. These sets are also known as {\it invariant manifolds} of the point $p$.

A periodic point $p$ with a period $m_p$ is said to be {\it hyperbolic} if the absolute values of each eigenvalues of the Jacobi matrix $\left.\left(\dfrac{\partial f^{m_p}}{\partial x}\right)\right\vert_p$ is not equal to $1$. If the absolute values of all the eigenvalues are less than $1$, then $p$ is called an {\it attracting}, a {\it sink point} or a {\it sink}; if the absolute values of all the eigenvalues are greater than 1, then p is called a {\it repelling}, a {\it source point} or a {\it sink}. Attracting and repelling fixed points are called {\it nodes}. A hyperbolic periodic  point which is not a node is called a {\it saddle point} or a {\it saddle}.

The hyperbolic structure of the  periodic point $p$ and the finiteness of the non-wandering set implies that its the stable and the unstable manifolds are smooth submanifolds of $M^n$ which are diffeomorphic to $\mathbb R^{q_p}$ and $\mathbb R^{n-q_p}$ respectively, where $q_p$ is a {\it Morse index of $p$}, that is the number of the eigenvalues of Jacobi matrix whose the absolute value is greater than $1$. 

A connected component $\ell^u_p\,(\ell^s_p)$ of the set $W^u_p \setminus p\, (W^s_p \setminus p)$ is called an unstable (stable) {\it separatrix} of the periodic point $p$. For $p$ let $\nu_p$ be $+1 (-1)$ if $\left. f^{m_p}\right\vert_{W^u_p}$ preserves (reverses) orientation and let $\mu_p$ be $+1 (-1)$ if $\left.f^{m_p}\right\vert_{W^s_p}$ preserves (reverses) orientation. 

A diffeomorphism $f: M^3 \to M^3$ is called a {\it Morse-Smale diffeomorphism} $(f \in MS(M^3))$ if 

1) the non-wandering set $\Omega_f$ is finite and hyperbolic;

2) for every two distinct periodic points $p,q$ the manifolds $W^s_p, W^u_q$ intersect transversally.

Note that all the facts below are proved in the case when $M^n$ is orientable, but the direct check allows us to verify the correctness of these results for non-orientable manifolds as well.

\begin{proposition}[\cite{GrMePo2016}, Theorem 2.1]\label{th_2.1.1}
Let $f \in MS(M^3)$. Then  
\begin{enumerate}
    \item $M^{n} = \underset{p\in\Omega_{f}}{\bigcup}W^{u}_{p}$,
    \item $W^{u}_{p}$ is a smooth submanifold of the manifold $M^n$  diffeomorphic to $\mathbb R^{q_p}$ for every periodic point $p \in \Omega_f$,
    \item $\mathrm{cl}(\ell^{u}_{p})\setminus(\ell^{u}_{p}\cup p) = \underset{r\in \Omega_f:\ell^{u}_{p}\cap W^{s}_{r}\neq \varnothing}{\bigcup}W^{u}_{r}$ for every unstable separatrix $\ell^{u}_{p}$ of a periodic point $p\in \Omega_f.$
\end{enumerate}
\end{proposition}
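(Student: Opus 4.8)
The three assertions are the standard structural facts for Morse--Smale diffeomorphisms, and the plan is to derive them from the local hyperbolic theory --- the stable/unstable manifold theorem and the inclination ($\lambda$-) lemma --- together with the finiteness and the absence of cycles in $\Omega_f$. Throughout I would replace $f$ by a power $g=f^{m}$ so that every point of $\Omega_f$ is a fixed point and every unstable separatrix is invariant, and record that for each such fixed point $p$ the stable manifold theorem yields smooth local disks $W^u_{loc}(p)$ and $W^s_{loc}(p)$, of dimensions $q_p$ and $n-q_p$, tangent at $p$ to the expanding and contracting eigenspaces.

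For the first assertion I would prove that $\alpha(x)$ is a single periodic orbit for every $x\in M^n$. The inclusion $\alpha(x)\subseteq\Omega_f$ is the general fact that every limit point of an orbit is non-wandering, and since $\Omega_f$ is finite, $\alpha(x)$ is then a finite invariant set, i.e. a union of periodic orbits. To reduce it to one orbit I would invoke that an $MS$-diffeomorphism has no cycles and therefore admits an adapted filtration $\varnothing=M_0\subset M_1\subset\dots\subset M_k=M^n$ with $f(M_i)\subset\mathrm{int}\,M_i$, each successive layer trapping exactly one orbit; watching in which layer the backward orbit of $x$ finally stabilises isolates one orbit $\mathcal O_p$, and $\alpha(x)=\mathcal O_p$ forces $f^{km_p}(x)\to p$ for the appropriate $p\in\mathcal O_p$, i.e. $x\in W^u_p$. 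This gives $M^n=\bigcup_p W^u_p$ and, at the same time, that the unstable manifolds are pairwise disjoint.

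The global unstable manifold is the increasing union $W^u_p=\bigcup_{k\ge 0}g^{k}(W^u_{loc}(p))$ of smooth disks, each lying in the interior of the next, hence an injectively immersed smooth copy of $\mathbb R^{q_p}$; the only real content of the second assertion is that this immersion is an embedding. I would deduce this from the stratification just obtained: by the first assertion any point of $\mathrm{cl}(W^u_p)\setminus W^u_p$ lies in $W^u_r$ for an orbit strictly lower in the filtration order, so a sequence in $W^u_p$ that leaves every compact disk of the immersion cannot converge back to a point of $W^u_p$; thus the immersion is proper onto its image and the subspace and intrinsic topologies coincide.

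The third assertion is the main point. For the inclusion $\supseteq$ I would use the $\lambda$-lemma: given $\ell^u_p\cap W^s_r\neq\varnothing$, pick $y$ in the intersection; since $q_p\ge q_r$ by transversality, I can choose inside $\ell^u_p$ a $q_r$-dimensional disk $D$ through $y$ transverse to $W^s_{loc}(r)$, and the inclination lemma gives that the forward $g$-images of $D$ accumulate in $C^1$ on $W^u_{loc}(r)$; as $D\subset\ell^u_p$ and $\ell^u_p$ is $g$-invariant, all of $W^u_r$ lies in $\mathrm{cl}(\ell^u_p)$, while disjointness of unstable manifolds keeps it out of $\ell^u_p\cup p$. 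For the reverse inclusion take $y\in\mathrm{cl}(\ell^u_p)\setminus(\ell^u_p\cup p)$; by the first assertion $y\in W^u_r$ for some $r$, and pulling points $y_m\in\ell^u_p$ with $y_m\to y$ back into a product-structured neighbourhood of $\mathcal O_r$ shows that $W^u_p$ accumulates on $W^u_{loc}(r)$; the converse half of the inclination lemma, namely that an unstable manifold accumulating on $W^u_r$ must meet $W^s_r$, then produces a point of $\ell^u_p\cap W^s_r$. I expect this reverse inclusion to be the main obstacle, since it requires turning mere accumulation of $W^u_p$ near $\mathcal O_r$ into an honest transverse intersection with $W^s_r$ and then checking that the intersection lands on the chosen separatrix $\ell^u_p$ rather than on a sibling separatrix of the same orbit --- the bookkeeping for the separatrix permutation under $f$ (versus the fixed power $g$) being the fiddly part.
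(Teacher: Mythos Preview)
The paper does not prove this proposition: it is quoted verbatim as Theorem~2.1 of the monograph \cite{GrMePo2016} in the section of auxiliary facts, and no argument is supplied. There is therefore no in-paper proof to compare your attempt against.

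For what it is worth, your outline follows the standard route taken in that monograph and in the classical sources (Palis, Smale): the stable/unstable manifold theorem for the local picture, the filtration coming from the no-cycle condition for item~(1) and for the embedding statement in item~(2), and the $\lambda$-lemma for item~(3). The one place I would push back is your handling of the reverse inclusion in~(3). The sentence ``the converse half of the inclination lemma, namely that an unstable manifold accumulating on $W^u_r$ must meet $W^s_r$'' is not a lemma one can cite off the shelf; the honest argument takes $y_m\in\ell^u_p$ with $y_m\to y\in W^u_r$, iterates backward into a linearising chart around $r$, and uses the local product structure there to extract a limit point on $W^s_{loc}(r)$, typically via an induction on the Smale partial order to rule out the accumulation escaping to a yet lower stratum. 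Your acknowledged worry about hitting the correct separatrix $\ell^u_p$ rather than a sibling is resolved exactly as you suggest, by working with the power $g$ that fixes each separatrix.
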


If $\sigma_1, \sigma_2$ are distinct periodic saddle points of a diffeomorphism $f \in MS(M^n)$ then the intersection  $W^s_{\sigma_1} \cap W^u_{\sigma_2} \neq \varnothing$ is called a {\it heteroclinic}. If $\dim(W^s_{\sigma_1} \cap W^u_{\sigma_2}) > 0$ then a connected component of the intersection $W^s_{\sigma_1} \cap W^u_{\sigma_2}$ is called a {\it heteroclinic manifold} and if $\dim(W^s_{\sigma_1} \cap W^u_{\sigma_2}) = 1$ then it is called a {\it heteroclinic curve}. If $\dim(W^s_{\sigma_1} \cap W^u_{\sigma_2}) = 0$ then the intersection $W^s_{\sigma_1} \cap W^u_{\sigma_2}$ is countable, each point of this set is called a {\it heteroclinic point}  and the orbit of a heteroclinic point is called the {\it heteroclinic orbit}. 

\begin{proposition}[\cite{GrMePo2016}, Proposition 2.3]\label{sep_sph}
    Let $f\in MS(M^n)$ and  $\sigma$ be a saddle point of $f$ such that the unstable separatrix $\ell_\sigma^u$ has no heteroclinic intersections. Then $$\mathrm{cl}(\ell^u_\sigma) \setminus (\ell_\sigma^u \cup \sigma) = \{\omega\},$$  where $\omega$ is a sink point. If $q_\sigma = 1$ then $\mathrm{cl}(\ell^u_\sigma)$ is an arc topologically embedded into $M^n$ and if $q_\sigma \geq 2$ then $\mathrm{cl}(\ell^u_\sigma)$ is the sphere $\mathbb S^{q_\sigma}$ topologically embedded into $M^n$.
\end{proposition}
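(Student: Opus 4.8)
\medskip
\noindent\textbf{Proof proposal.} The plan is to first determine $\mathrm{cl}(\ell^u_\sigma)$ as a set, and then to recognise its homeomorphism type by examining the separatrix near its two ends.

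\emph{Step 1: the closure is $\ell^u_\sigma\cup\{\sigma,\omega\}$.} First I would apply Proposition~\ref{th_2.1.1}(3): $\mathrm{cl}(\ell^u_\sigma)\setminus(\ell^u_\sigma\cup\sigma)=\bigcup_{r\in\Omega_f:\ \ell^u_\sigma\cap W^s_r\neq\varnothing}W^u_r$. The no-heteroclinic hypothesis removes every saddle $r$ from this union, and a source $r$ cannot occur either, since then $W^s_r=\{r\}$ would force $r\in\ell^u_\sigma\subset W^u_\sigma$, impossible as $W^u_\sigma$ contains no periodic point other than $\sigma$. So only sinks contribute, and since $W^u_r=\{r\}$ for a sink, $\mathrm{cl}(\ell^u_\sigma)\setminus(\ell^u_\sigma\cup\sigma)$ is a set of sinks. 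To see it is a single sink, note that $M^n=\bigcup_{p\in\Omega_f}W^s_p$ (Proposition~\ref{th_2.1.1}(1) applied to $f^{-1}$) and that, again by the hypothesis, no point of $\ell^u_\sigma$ lies in the stable manifold of a saddle or of a source; hence $\ell^u_\sigma\subset\bigcup_{\omega\ \mathrm{sink}}W^s_\omega$. Since basins of sinks are open and pairwise disjoint and $\ell^u_\sigma$ is connected, $\ell^u_\sigma$ lies in a single basin $W^s_\omega$; and since $\ell^u_\sigma\cup\{\sigma\}$ is not compact while $M^n$ is, the closure must add at least one point, which, lying in $\mathrm{cl}(W^s_\omega)$, can only be $\omega$. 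Thus $\mathrm{cl}(\ell^u_\sigma)=\ell^u_\sigma\cup\{\sigma\}\cup\{\omega\}=W^u_\sigma\cup\{\omega\}$, and this set is compact.

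\emph{Step 2: the case $q_\sigma\ge2$.} Here $W^u_\sigma\setminus\sigma$ is connected, so $\ell^u_\sigma=W^u_\sigma\setminus\sigma$ and $\mathrm{cl}(\ell^u_\sigma)=W^u_\sigma\cup\{\omega\}$. By Proposition~\ref{th_2.1.1}(2), $W^u_\sigma$ is an embedded submanifold diffeomorphic to $\mathbb R^{q_\sigma}$; being locally compact and Hausdorff, it sits inside the compact Hausdorff space $\mathrm{cl}(\ell^u_\sigma)$ as an open dense subspace whose complement is the single point $\omega$. Therefore $\mathrm{cl}(\ell^u_\sigma)$ is homeomorphic to the one-point compactification of $\mathbb R^{q_\sigma}$, i.e.\ to $\mathbb S^{q_\sigma}$; composing this homeomorphism with the inclusion $\mathrm{cl}(\ell^u_\sigma)\hookrightarrow M^n$ yields the required topological embedding of $\mathbb S^{q_\sigma}$.

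\emph{Step 3: the case $q_\sigma=1$.} Now $\ell^u_\sigma$ is a smoothly embedded ray; I would fix a homeomorphism $\gamma:\mathbb R\to\ell^u_\sigma$ and a power $g$ of $f$ fixing $\sigma$ and $\omega$ and preserving $\ell^u_\sigma$, and choose a compact fundamental arc $D_0\subset\ell^u_\sigma$ for $g|_{\ell^u_\sigma}$, so that $\ell^u_\sigma=\bigcup_{j\in\mathbb Z}g^j(D_0)$, with $g^j(D_0)$ converging (in Hausdorff distance) to $\sigma$ as $j\to-\infty$ and to $\omega$ as $j\to+\infty$. Then for every $K$ the set $N_K=\{\omega\}\cup\bigcup_{j\ge K}g^j(D_0)$ is a neighbourhood of $\omega$ in $\mathrm{cl}(\ell^u_\sigma)$: it is open because its complement $\{\sigma\}\cup\bigcup_{j<K}g^j(D_0)$ is closed in $M^n$ (its only possible new limit point is $\sigma\neq\omega$), and it is a bijective continuous image of a compact interval in a Hausdorff space, hence homeomorphic to $[0,1]$ with $\omega$ an endpoint. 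A submanifold chart of $W^u_\sigma$ at $\sigma$ exhibits $\sigma$ as a boundary-type point in the same way. Together with the open $1$-manifold $\ell^u_\sigma$ in between, this shows $\mathrm{cl}(\ell^u_\sigma)$ is a compact connected $1$-manifold with boundary $\{\sigma,\omega\}$, hence an arc, topologically embedded in $M^n$ (and smoothly so off $\{\sigma,\omega\}$).

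\emph{Where the difficulty lies.} The one genuinely delicate point is the behaviour of the separatrix at the sink $\omega$: a priori $\ell^u_\sigma$ could accumulate on $\omega$ so intricately that $\mathrm{cl}(\ell^u_\sigma)$ fails to be a manifold there. The ``no heteroclinic intersections'' hypothesis is precisely what rules this out — it confines the whole separatrix to the single basin $W^s_\omega$, where the dynamics contracts toward $\omega$, so that $\mathrm{cl}(\ell^u_\sigma)$ adds to $W^u_\sigma$ exactly the one point $\omega$ and a fundamental domain of $\ell^u_\sigma$ is merely pushed toward $\omega$ by iteration. Granting this, Step~2 (one-point compactification) and Step~3 (fundamental-arc decomposition) are short, and the remaining verifications are routine bookkeeping with linearizing charts at the hyperbolic points $\sigma$ and $\omega$.
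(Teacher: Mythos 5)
Your proof is correct. Note that the paper itself gives no proof of this statement --- it is quoted from \cite{GrMePo2016}, Proposition 2.3 --- so there is no in-paper argument to compare against; your route (use Proposition~\ref{th_2.1.1}(3) to reduce the limit set to sinks, use connectedness of $\ell^u_\sigma$ together with the openness and pairwise disjointness of sink basins to get a single sink $\omega$, then identify $\mathrm{cl}(\ell^u_\sigma)$ as the one-point compactification of $W^u_\sigma\cong\mathbb R^{q_\sigma}$ when $q_\sigma\ge 2$, resp.\ a two-point compactification of a ray when $q_\sigma=1$) is the standard argument and essentially the one in the cited monograph. Two points you use implicitly and should make explicit: (i) the one-point-compactification step needs the subspace topology on $W^u_\sigma$ induced from $M^n$ to coincide with its intrinsic $\mathbb R^{q_\sigma}$ topology, which is exactly what ``smooth submanifold'' in Proposition~\ref{th_2.1.1}(2) supplies (for a merely injectively immersed copy of $\mathbb R^{q_\sigma}$ the conclusion would be false); (ii) excluding $r=\sigma$ from the union in Proposition~\ref{th_2.1.1}(3), i.e.\ the absence of homoclinic points, is not literally covered by the ``no heteroclinic intersections'' hypothesis (which concerns distinct saddles) but follows from the standard no-cycle property of Morse--Smale systems. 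With those remarks added, the argument is complete.
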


A diffeomorphism $f \in MS(M^n)$ is called a {\it ``source-sink''} diffeomorphism if its non-wandering set consists of a unique sink and a unique source.

\begin{proposition}[\cite{GrMePo2016}, Theorem 2.5]\label{NS_diff}
    If a diffeomorphism $f\in MS(M^n)$, $n >1$, has no saddle points  then
    $f$ is a ``source-sink'' diffeomorphism and the manifold $M^n$ is homeomorphic to the $n$-sphere $\mathbb S^n$.\footnote{The second part of this statement can be known as a special case of the Reeb's Theorem \cite{reeb}}
\end{proposition}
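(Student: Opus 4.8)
The plan is to split the statement into a combinatorial assertion — that $\Omega_f$ reduces to a single sink and a single source — and a topological one — that $M^n$ is a sphere. The first I would obtain from the cell structure of the invariant manifolds together with a connectivity argument, and the second from the uniqueness of the one-point compactification.

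First I would observe that, since $f$ has no saddles, every point of $\Omega_f$ is a sink (Morse index $0$, so that $W^u_p$ is a point and $W^s_p \cong \mathbb R^n$) or a source (Morse index $n$, so that $W^u_p \cong \mathbb R^n$ and $W^s_p$ is a point), using Proposition~\ref{th_2.1.1}. Applying Proposition~\ref{th_2.1.1}(1) to $f^{-1} \in MS(M^n)$ gives $M^n = \bigcup_{p \in \Omega_f} W^s_p$; the stable manifolds being pairwise disjoint, each source $p$ contributes the single point $W^s_p = \{p\}$ while each sink $\omega$ contributes an open $n$-cell $W^s_\omega \cong \mathbb R^n$. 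Writing $R$ for the finite set of source points, this reads $M^n \setminus R = \bigsqcup_{\omega} W^s_\omega$, a disjoint union of nonempty open sets indexed by the sinks. Since $\dim R = 0 \leq n-2$ (here the hypothesis $n \geq 2$ enters), Proposition~\ref{n-2} shows $R$ does not divide $M^n$, so $M^n \setminus R$ is connected and there is exactly one sink $\omega$. The symmetric argument, based on $M^n = \bigcup_p W^u_p$ with the sinks removed, produces exactly one source $\alpha$; both are then necessarily fixed. Thus $\Omega_f = \{\omega, \alpha\}$ and $f$ is a ``source-sink'' diffeomorphism.

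For the topology, the previous step gives $M^n = W^s_\omega \sqcup W^s_\alpha = W^s_\omega \sqcup \{\alpha\}$, so $W^s_\omega = M^n \setminus \{\alpha\}$, a set diffeomorphic to $\mathbb R^n$. Hence $M^n$ is a compact manifold obtained by adjoining the single point $\alpha$ to a copy of $\mathbb R^n$; as $n \geq 2$ this point is not isolated, so $M^n \setminus \{\alpha\}$ is dense in $M^n$. Extending a fixed homeomorphism $M^n \setminus \{\alpha\} \to \mathbb R^n$ by $\alpha \mapsto \infty$ yields a continuous bijection from the compact space $M^n$ onto the Hausdorff space $(\mathbb R^n)^+ \cong \mathbb S^n$, which is automatically a homeomorphism. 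Therefore $M^n \cong \mathbb S^n$.

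Most of the work is bookkeeping; the two steps I would watch are the claim that the sinks' stable manifolds form a disjoint \emph{open} cover of $M^n \setminus R$ (so that connectivity truly forces a single sink), and the concluding identification, where one can only assert a homeomorphism. The latter limitation is intrinsic: passing to the one-point compactification is purely topological and transports no smooth structure across the point $\alpha$, which is exactly the reason Reeb's theorem delivers a topological, rather than a smooth, sphere.
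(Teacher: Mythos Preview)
The paper does not supply its own proof of this proposition; it is quoted as Theorem~2.5 of \cite{GrMePo2016} and used as a black box in the induction base. Your argument is correct and is essentially the standard one from that reference: the connectivity step---removing the finite ($0$-dimensional) set of sources and invoking Proposition~\ref{n-2} to conclude that the disjoint open sink basins cannot be more than one---is precisely the device the paper itself deploys in Lemma~\ref{omega}, and the one-point-compactification identification $M^n\cong(\mathbb R^n)^+\cong\mathbb S^n$ is the usual closing move. There is nothing further to compare against in the paper proper.
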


\begin{proposition}[\cite{Grines2010}, Theorem 1]\label{AfR} 
    Let $f \in MS(M^n)$ and  $\Omega_A$ be such a subset of $\Omega_f$ that the set $$A = \Omega_0 \cup W^u_{\Omega_A}$$ is closed and $f$-invariant. Then 
    \begin{enumerate}
        \item the set $A$ is an attractor of the diffeomorphism $f$;
        \item $W^s_A = \bigcup\limits_{p \in (A \cap \Omega_f)} W^s_p$;
        \item $\dim A = \max\limits_{p \in (A \cap \Omega_f)}\{q_p\}$.
     \end{enumerate}
\end{proposition}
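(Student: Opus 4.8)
Throughout, write $\hat\Omega:=A\cap\Omega_f$. By hypothesis $A$ is a union of unstable manifolds of the periodic points it contains, so $A=\bigcup_{p\in\hat\Omega}W^u_p$; since distinct unstable manifolds are disjoint and $f(W^u_p)=W^u_{f(p)}$, the $f$-invariance of $A$ forces $\hat\Omega$ to be a union of periodic orbits. The first thing I would record is that the closedness of $A$ is equivalent to $\hat\Omega$ being \emph{downward-closed}: by statement $3$ of Proposition \ref{th_2.1.1}, $\mathrm{cl}(\ell^u_p)\setminus(\ell^u_p\cup p)=\bigcup_{r:\,\ell^u_p\cap W^s_r\neq\varnothing}W^u_r$, so if $A$ is closed then every such $W^u_r$, and hence every such $r$, lies in $\hat\Omega$. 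Moreover, the transversality condition in the definition of a Morse-Smale diffeomorphism gives $\dim(W^u_p\cap W^s_r)=q_p+(n-q_r)-n=q_p-q_r\ge 0$, i.e. $q_r\le q_p$ along every heteroclinic connection. These two facts — downward-closure and the index inequality — drive all three parts.

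For part $1$, the set $A$ is compact (closed in the compact $M^n$) and $f$-invariant, so it remains only to exhibit a trapping neighbourhood. I would construct it by induction on the Morse index along the order on $\hat\Omega$. Each sink in $A$ is attracting, hence admits an arbitrarily small closed ball $N$ with $f(N)\subset\mathrm{int}\,N$. Assuming a compact neighbourhood $U_{k-1}$ of $A_{k-1}:=\bigcup_{p\in\hat\Omega,\,q_p\le k-1}W^u_p$ with $f(U_{k-1})\subset\mathrm{int}\,U_{k-1}$ has been built, take $p\in\hat\Omega$ with $q_p=k$; by the downward-closure, $\mathrm{cl}(W^u_p)\setminus W^u_p\subseteq A_{k-1}$, so the copy of $\mathbb R^{k}$ that is $W^u_p$ limits onto $U_{k-1}$, and using a linearising chart at $p$ together with the inclination lemma (also known as the $\lambda$-lemma) to control the approach of $W^u_p$ to the lower strata I would thicken $U_{k-1}$ along $W^u_p$ to a compact $U_k\supseteq U_{k-1}\cup W^u_p$ still satisfying $f(U_k)\subset\mathrm{int}\,U_k$. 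Setting $U_A:=U_d$ with $d=\max_{p\in\hat\Omega}q_p$, a point of $\bigcap_{k\ge 0}f^k(U_A)$ is precisely one whose entire backward orbit remains in $U_A$, i.e. whose $\alpha$-limit is a periodic orbit of $\hat\Omega$; hence $\bigcap_{k\ge 0}f^k(U_A)=W^u_{\hat\Omega}=A$ and $A$ is an attractor. This inductive thickening — equivalently, the existence of a filtration of $M^n$ adapted to the downward-closed set $\hat\Omega$ — is the technical heart of the proof and the step I expect to be most delicate.

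For part $2$, the inclusion $\bigcup_{p\in\hat\Omega}W^s_p\subseteq W^s_A$ is immediate, since for $x\in W^s_p$ with $p\in\hat\Omega$ the forward orbit converges to the orbit of $p\subseteq A$, so $d(f^k(x),A)\to 0$. For the reverse inclusion I would apply statement $1$ of Proposition \ref{th_2.1.1} to $f^{-1}$ to obtain $M^n=\bigcup_{r\in\Omega_f}W^s_r$; thus any $x$ lies in some $W^s_q$ and $\omega(x)$ is the orbit of $q$. If moreover $x\in W^s_A$, then $d(f^k(x),A)\to 0$ together with the closedness of $A$ forces $\omega(x)\subseteq A$, whence $q\in A\cap\Omega_f=\hat\Omega$ and $x\in W^s_q\subseteq\bigcup_{p\in\hat\Omega}W^s_p$.

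For part $3$, put $d=\max_{p\in\hat\Omega}q_p$. Each $W^u_p$ is a submanifold diffeomorphic to $\mathbb R^{q_p}$ (statement $2$ of Proposition \ref{th_2.1.1}), so $A$ contains a subset of dimension $d$, and monotonicity of topological dimension gives $\dim A\ge d$. For the upper bound I would use the closure structure: the downward-closure and the index inequality $q_r\le q_p$ show that $A_k:=\bigcup_{p\in\hat\Omega,\,q_p\le k}W^u_p$ is \emph{closed}, because the closure of each $W^u_p$ with $q_p\le k$ adds only unstable manifolds of index $\le k$. Writing each $q_p$-dimensional manifold $W^u_p$ as a countable union of compact $q_p$-balls, which are closed in $A$, and invoking the countable sum theorem for covering dimension \cite{HurWall_1948}, an induction on $k$ yields $\dim A_k\le k$; in particular $\dim A=\dim A_d\le d$. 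Combining the two bounds gives $\dim A=d$, as claimed.
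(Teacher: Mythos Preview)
The paper does not prove this proposition at all: it is quoted verbatim as a result of Grines--Zhuzhoma--Medvedev--Pochinka \cite{Grines2010} and used as a black box (for instance in the proofs of Lemmas~\ref{omega} and~\ref{lel}). So there is no in-paper argument to compare your attempt against.

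That said, your sketch is along the right lines and matches the standard filtration approach one finds in the cited source and in \cite{GrMePo2016}. Part~2 is clean and complete. Part~3 is essentially correct: your observation that each $A_k$ is closed (via downward-closure plus the transversality inequality $q_r\le q_p$) is the key point, and invoking the countable-sum theorem after exhausting each $W^u_p\cong\mathbb R^{q_p}$ by compact balls is the right device. The one place that remains genuinely underdetermined is Part~1: ``thicken $U_{k-1}$ along $W^u_p$ \dots\ using a linearising chart at $p$ together with the $\lambda$-lemma'' names the ingredients but does not actually produce a compact $U_k$ with $f(U_k)\subset\mathrm{int}\,U_k$. The issue is that $W^u_p$ is non-compact and its ends accumulate on $A_{k-1}$; one has to patch a local linearised tube near $p$ to the already-built $U_{k-1}$ in a controlled way and then check the forward-invariance of the patched region, and this is exactly the work done in the filtration theorems you are implicitly appealing to. You flag this yourself as the ``technical heart'', which is accurate; as written it is a correct outline rather than a proof.
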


For an orbit $\mathcal O_p$ of a point $p$, let $m_{\mathcal O_p}=m_p$, $q_{\mathcal O_p}=q_p$, $\nu_{\mathcal O_p}=\nu_p$, $\mu_{\mathcal O_p} = \mu_p$,  $W_{\mathcal{O}_p}^s = \bigcup\limits_{q\in\mathcal{O}_p}W^s_q$,  $W_{\mathcal {O}_p}^u = \bigcup\limits_{q\in\mathcal{O}_p}W^u_q$.

Following the classic paper by S. Smale \cite{smale1967differentiable}, we introduce on the set of periodic orbits of $f \in MS(M^n)$ a partial order $\prec$: $$\mathcal{O}_i \prec \mathcal{O}_j \iff W_{\mathcal{O}_i}^s \cap W_{\mathcal{O}_j}^u \neq \varnothing.$$

According to Szpilrajn's theorem \cite{Szpilrajn1930}, any partial order (including the Smale order) can be extended to a total order. Let us consider a special kind of such total order on the set of all periodic orbits.

We say that numbering of the periodic orbits $\mathcal{O}_1, \cdots, \mathcal{O}_{k_f}$ of the diffeomorphism $f\in MS(M^n)$ is a {\it dynamical} if it satisfies the following conditions:
\begin{enumerate}
\item $\mathcal{O}_i \prec \mathcal{O}_j \implies i \leqslant j$;
\item $q_{\mathcal{O}_i} < q_{\mathcal{O}_j} \implies i<j$.
\end{enumerate}

\begin{proposition}[\cite{GrMePo2016}, Proposition 2.6]
    For any diffeomorphism $f \in MS(M^n)$ there is a dynamical numbering of its periodic orbits.
\end{proposition}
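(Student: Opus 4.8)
The plan is to refine the Smale order $\prec$ into an auxiliary partial order that already incorporates the Morse-index requirement, and then to invoke Szpilrajn's theorem a single time. First I would record the only input coming from the dynamics: if $\mathcal{O}_i \prec \mathcal{O}_j$, then $q_{\mathcal{O}_i} \leq q_{\mathcal{O}_j}$. Indeed, $W^s_{\mathcal{O}_i} \cap W^u_{\mathcal{O}_j} \neq \varnothing$ and, by the transversality condition in the definition of $MS(M^n)$, this intersection is transversal; since $W^s_{\mathcal{O}_i}$ is a submanifold of dimension $n - q_{\mathcal{O}_i}$ and $W^u_{\mathcal{O}_j}$ of dimension $q_{\mathcal{O}_j}$, a non-empty transversal intersection forces $(n - q_{\mathcal{O}_i}) + q_{\mathcal{O}_j} \geq n$, that is $q_{\mathcal{O}_i} \leq q_{\mathcal{O}_j}$.

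Next I would introduce the relation $\prec'$ on the set of periodic orbits defined by: $\mathcal{O}_i \prec' \mathcal{O}_j$ iff $q_{\mathcal{O}_i} < q_{\mathcal{O}_j}$, or $q_{\mathcal{O}_i} = q_{\mathcal{O}_j}$ and $\mathcal{O}_i \prec \mathcal{O}_j$, and check that it is a partial order. Reflexivity is inherited from that of $\prec$; antisymmetry holds because $\mathcal{O}_i \prec' \mathcal{O}_j$ and $\mathcal{O}_j \prec' \mathcal{O}_i$ cannot involve a strict index inequality, hence force $q_{\mathcal{O}_i} = q_{\mathcal{O}_j}$ and then $\mathcal{O}_i \prec \mathcal{O}_j \prec \mathcal{O}_i$, so $\mathcal{O}_i = \mathcal{O}_j$ by antisymmetry of $\prec$; transitivity is a short case analysis on which of the index inequalities are strict, using transitivity of $<$ in one branch and of $\prec$ (within a fixed index value) in the other. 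Finally, $\prec'$ contains $\prec$: if $\mathcal{O}_i \prec \mathcal{O}_j$, then by the first step $q_{\mathcal{O}_i} \leq q_{\mathcal{O}_j}$, and whichever of $<$ or $=$ occurs, the definition of $\prec'$ yields $\mathcal{O}_i \prec' \mathcal{O}_j$.

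To conclude, since $\Omega_f$ is finite, Szpilrajn's theorem extends $\prec'$ to a total order on the periodic orbits; numbering them $\mathcal{O}_1, \dots, \mathcal{O}_{k_f}$ along this total order gives a dynamical numbering. Condition~1 holds because $\mathcal{O}_i \prec \mathcal{O}_j$ implies $\mathcal{O}_i \prec' \mathcal{O}_j$, hence $i \leq j$; condition~2 holds because $q_{\mathcal{O}_i} < q_{\mathcal{O}_j}$ implies $\mathcal{O}_i \prec' \mathcal{O}_j$, hence $i \leq j$, and $q_{\mathcal{O}_i} \neq q_{\mathcal{O}_j}$ forces $i \neq j$. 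I do not expect a genuine obstacle here: the only non-formal ingredient is the index-monotonicity of $\prec$ established in the first step, and that is precisely what makes $\prec'$ a legitimate extension of $\prec$. (Equivalently, one could partition the orbits into blocks according to their Morse index, linearly order each block by a Szpilrajn extension of the restriction of $\prec$ to it, and concatenate the blocks in order of increasing index; the first step is again what guarantees that no relation of $\prec$ runs from a later block to an earlier one.)
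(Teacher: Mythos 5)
Your proof is correct: the transversality observation that $\mathcal{O}_i \prec \mathcal{O}_j$ forces $q_{\mathcal{O}_i} \leq q_{\mathcal{O}_j}$ is exactly the one non-formal ingredient needed, and the refinement of $\prec$ by the Morse index followed by a single application of Szpilrajn's theorem is the standard argument. The paper cites this proposition from the monograph without reproducing the proof, and the proof given there proceeds in essentially the same way (ordering the orbits by index and extending the Smale order within each index class), so there is nothing to add.
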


\subsection{Orbit spaces}

In this section, we present concepts and facts whose detailed presentation and proof can be found in the monograph \cite{GrMePo2016}.

Let $f:M^n \to M^n$ be a diffeomorphism and let $X \subset M^n$ be an $f$-invariant set.  It can be checked directly that the relation $x \sim y \iff \exists k \in \mathbb{Z}: y = f^k(x)$ is an equivalence relation on $X$. The quotient set $X/f$ induced by this relation is called an {\it  orbits space of the action of $f$ on $X$}. Let us denote by $p_{_{X/f}}:X\to X/f$  the natural projection.
{\it A fundumental domain of the action of $f$ on $X$} is a  closed set $D_X \subset X$ such that there is a set $\tilde D_X$ satisfying:
\begin{enumerate}
    \item $\mathrm{cl}(\tilde D_X) = D_X$;
    \item $f^k(\tilde D_X) \cap \tilde D_X = \varnothing$ for each $k \in \mathbb Z \setminus \{0\}$;
    \item $\bigcup\limits_{k \in \mathbb Z} f^k(\tilde D_X) = X$.
\end{enumerate}

If the projection $p_{_{X/f}}$ is a cover and the orbits space $X/f$ is connected then, by virtue of the Monodromy Theorem (see, for example, \cite{GrMePo2016}, p.60), for a loop $\hat c \subset X/f$, closed at a point $\hat x$, there exists its lift $c \subset X$ which is a path joining  points $x \in p^{-1}_{_{X/f}}(\hat x)$ and $f^k(x)$. In this case, a map $\eta_{_{X/f}}: \pi_1(X/f) \to \mathbb{Z}$, defined by the formula $\eta_{_{X/f}}([\hat c]) = k$, is a homomorphism which is called {\it induced by the cover $p_{_{X/f}}$}. 

\begin{proposition}\label{cycle} Let $f$ and $f'$ be diffeomorphisms defined on $f$- and $f'$-invariant set $X$. If $\hat h:X/f \to X/f'$ is a homeomorphism for which $\eta_{X/f} = \eta_{X/f'}\hat h$ then there is a  homeomorphism $h:X \to X$  which is a lift of $\hat h$  $(p_{_{X/f'}} h=\hat h p_{_{X/f}})$ and such that $hf = f'h$.
\end{proposition}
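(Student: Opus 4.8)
The plan is to build the lift $h$ by patching together lifts on fundamental domains, using the hypothesis $\eta_{X/f} = \eta_{X/f'}\hat h$ to guarantee the patches agree under the deck transformations $f$ and $f'$. First I would fix a fundamental domain for the action of $f$ on $X$: choose $\tilde D_X \subset X$ with $\mathrm{cl}(\tilde D_X) = D_X$, the iterates $f^k(\tilde D_X)$ pairwise disjoint, and $\bigcup_k f^k(\tilde D_X) = X$. Passing to the orbit space, $p_{_{X/f}}$ restricted to (the interior of) $D_X$ is essentially a homeomorphism onto $X/f$ minus a codimension-one set, and similarly $p_{_{X/f'}}$ is a covering map; so locally one can invert the projections. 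Concretely, on a simply connected open set $\hat U \subset X/f$ the covering $p_{_{X/f}}$ has well-defined local sections, and the same holds for $p_{_{X/f'}}$, so I would first define $h$ locally by $h = s' \circ \hat h \circ p_{_{X/f}}$ where $s'$ is a local section of $p_{_{X/f'}}$; this is a homeomorphism onto its image wherever it is defined.

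The core of the argument is showing these local definitions glue into a single global homeomorphism $h:X\to X$ satisfying $p_{_{X/f'}}h = \hat h p_{_{X/f}}$ and $hf = f'h$. By the standard lifting criterion for covering spaces (or directly via the Monodromy Theorem quoted in the excerpt), a continuous map $\hat h \circ p_{_{X/f}} : X \to X/f'$ lifts to a map $h : X \to X$ through the covering $p_{_{X/f'}}$ precisely when $(\hat h p_{_{X/f}})_* \pi_1(X) \subset (p_{_{X/f'}})_* \pi_1(X)$; but $(p_{_{X/f}})_*\pi_1(X) = \ker \eta_{X/f}$ and $(p_{_{X/f'}})_*\pi_1(X) = \ker \eta_{X/f'}$, and since $\hat h_* $ carries $\ker\eta_{X/f}$ into $\ker\eta_{X/f'}$ exactly because $\eta_{X/f} = \eta_{X/f'}\hat h_*$, the criterion holds. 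Thus a lift $h$ exists; running the same argument for $\hat h^{-1}$ produces a lift $g$ of $\hat h^{-1} p_{_{X/f'}}$, and $gh$, $hg$ are lifts of the identity, hence equal to deck transformations $f^{a}$, $f'^{b}$; composing $h$ with the appropriate iterate of $f$ (which does not change $p_{_{X/f'}}h$) we may normalize so that $gh = \mathrm{id}$ and $hg=\mathrm{id}$, so $h$ is a homeomorphism.

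It remains to verify the equivariance $hf = f'h$. Both $hf$ and $f'h$ are lifts of the same map $\hat h p_{_{X/f}}$ through $p_{_{X/f'}}$, since $p_{_{X/f'}}(hf) = \hat h p_{_{X/f}} f = \hat h p_{_{X/f}} = p_{_{X/f'}}(f'h)$; hence they differ by a deck transformation, i.e. $hf = (f')^{m} h$ for some integer $m$. To pin down $m=1$, I would evaluate on a path realizing a loop $\hat c$ with $\eta_{X/f}([\hat c]) = 1$: lifting $\hat c$ via $p_{_{X/f}}$ gives a path from $x$ to $f(x)$, and applying $h$ gives a path from $h(x)$ to $h(f(x)) = (f')^m h(x)$ which projects to $\hat h(\hat c)$; by definition of $\eta_{X/f'}$ this endpoint is $(f')^{\eta_{X/f'}([\hat h \hat c])} h(x)$, and $\eta_{X/f'}([\hat h\hat c]) = \eta_{X/f'}(\hat h_*[\hat c]) = \eta_{X/f}([\hat c]) = 1$, forcing $m=1$. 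The main obstacle I anticipate is the bookkeeping around which covering-space hypotheses are actually in force — the statement does not explicitly say $p_{_{X/f}}$ and $p_{_{X/f'}}$ are coverings or that the orbit spaces are connected — so the cleanest route is to invoke exactly the Monodromy Theorem and induced homomorphism setup introduced just above the proposition, under which the lifting criterion is immediate; the equivariance normalization is then routine.
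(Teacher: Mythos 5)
The paper states this proposition without proof, deferring to the monograph \cite{GrMePo2016}, and your covering-space argument is the standard proof of exactly this statement; it is correct. The key steps — applying the lifting criterion to $\hat h\circ p_{_{X/f}}$ through $p_{_{X/f'}}$ after identifying $(p_{_{X/f}})_*\pi_1(X)=\ker\eta_{_{X/f}}$ and $(p_{_{X/f'}})_*\pi_1(X)=\ker\eta_{_{X/f'}}$ (the coverings are regular with deck group $\mathbb Z$ generated by $f$, resp.\ $f'$), upgrading the lift to a homeomorphism via the lift of $\hat h^{-1}$ and a normalization by a power of $f$, and pinning down the deck transformation relating $hf$ and $f'h$ by evaluating $\eta_{_{X/f'}}$ on the image of a loop with $\eta_{_{X/f}}=1$ — are all carried out correctly, under the implicit standing hypotheses (which you rightly flag) that the projections are coverings with connected total spaces, the situation in which the proposition is actually applied.
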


\begin{proposition}[\cite{GrMePo2016}, Theorem 2.1.3]\label{T_2.1.3.}
Let $f \in MS(M^n)$, $A$ be an attractor of $f$,   $\ell^s_A=W^s_A\setminus A$, $\hat\ell^s_A=\ell^s_A/f$ and  $D_{\ell^s_A}$ be a fundamental domain of the action of $f$ on $\ell^s_A$. Then the projection $p_{\hat\ell^s_A}$ is a cover and the orbits space $\hat\ell^s_A$ is a smooth closed $n$-manifold homeomorphic to $p_{\hat\ell^s_A}(D_{\ell^s_A})$. In particular, if the attractor $A$ coincides with a sink orbit then the manifold $\hat \ell^s_A$ is homeomorphic to following manifolds:
\begin{itemize}
    \item $\mathbb S^1$  for $n=1$;
    \item $\mathbb{S}^{n-1}\tilde{\times} \mathbb{S}^{1}$  for $n>1,\,\nu_A=-1$;
    \item $\mathbb{S}^{n-1}{\times} \mathbb{S}^{1}$  for $n>1,\,\nu_A=+1$.
\end{itemize}
\end{proposition}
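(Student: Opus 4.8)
The plan is to realise $\hat\ell^s_A$ as the quotient of the open $n$-manifold $\ell^s_A=W^s_A\setminus A$ by the free and properly discontinuous action of the cyclic group $\langle f\rangle\cong\mathbb Z$, to describe its global shape through a fundamental domain coming from a trapping neighbourhood, and finally to compute the homeomorphism type explicitly when $A$ is a sink orbit. For \emph{freeness}, note that a periodic point $x$ whose forward orbit accumulates on $A$ must in fact lie in $A$, since its orbit is finite and eventually trapped in $A$; hence $\ell^s_A$ contains no periodic points (compare the basin decomposition $W^s_A=\bigcup_{p\in A\cap\Omega_f}W^s_p$ of Proposition \ref{AfR}) and $f^k(x)=x$ with $x\in\ell^s_A$ forces $k=0$.

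For \emph{proper discontinuity} and the \emph{fundamental domain} I would fix a trapping neighbourhood $U_A$, so that $f(U_A)\subset\mathrm{int}\,U_A$ and $A=\bigcap_{k\ge0}f^k(U_A)$. Every point of $U_A$ tends to $A$, so $U_A\subset W^s_A$, while $A=f(A)\subset\mathrm{int}\,f(U_A)$; hence the compact shell $K=\mathrm{cl}\big(U_A\setminus f(U_A)\big)$ is disjoint from $A$ and contained in $\ell^s_A$. The next step is to check that the forward orbit of any $x\in\ell^s_A$ eventually enters $\mathrm{int}\,f^{N}(U_A)$ whereas its backward orbit eventually leaves $U_A$, so that each orbit meets the interior of $K$ in exactly one point. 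This makes $K$ a fundamental domain, yields proper discontinuity, and therefore shows that $p_{\hat\ell^s_A}$ is a covering onto a smooth $n$-manifold homeomorphic to $p_{\hat\ell^s_A}(K)=p_{\hat\ell^s_A}(D_{\ell^s_A})$. To see that this manifold is \emph{closed}, I would produce a Lyapunov function $\psi:\ell^s_A\to\mathbb R$ with $\psi\circ f=\psi-1$; it descends to a locally trivial fibration $\hat\ell^s_A\to\mathbb R/\mathbb Z=\mathbb S^1$ with fibre the closed $(n-1)$-manifold $\partial U_A$, exhibiting $\hat\ell^s_A$ as the mapping torus of $f|_{\partial U_A}$ and hence as a compact $n$-manifold without boundary.

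For the \emph{sink case}, let $A=\mathcal O_\omega$ have period $m$. Since the Morse index of $\omega$ is $0$, the stable manifold $W^s_\omega$ is diffeomorphic to $\mathbb R^n$ and $f^{m}|_{W^s_\omega}$ is conjugate to a linear contraction $L$; passing to one orbit component reduces the $\langle f\rangle$-quotient to the $\langle L\rangle$-quotient of $\ell^s_A\cong\mathbb R^n\setminus\{0\}\cong\mathbb S^{n-1}\times\mathbb R$, on which $L$ acts by $(\theta,t)\mapsto(g(\theta),t-1)$ with $g$ the map induced on $\mathbb S^{n-1}$. The quotient is then the mapping torus of $g$. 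Because $GL^{+}(n,\mathbb R)$ and $GL^{-}(n,\mathbb R)$ are each connected, $g$ is isotopic to the identity when $\det L>0$ and to a reflection when $\det L<0$, giving $\mathbb S^{n-1}\times\mathbb S^1$ and the twisted bundle $\mathbb S^{n-1}\tilde{\times}\mathbb S^1$ respectively; the sign of $\det L$ is exactly the orientation character $\nu_A$ of $f^{m}$ on the invariant manifold of the node, and the remaining low case $n=1$ is verified directly and gives $\mathbb S^1$.

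The main obstacle I anticipate is twofold. First, the clean, coordinate-free construction of the shell $K$ and of its collar (mapping-torus) structure: one must arrange $\partial U_A$ to be a genuine closed $(n-1)$-manifold transverse to the orbits and check that the gluing by $f$ produces no boundary, which is where the careful use of a trapping neighbourhood and a Lyapunov function is essential. Second, the orientation bookkeeping of the sink case, where the point is to identify ``orientation-reversing'' precisely with the non-orientable bundle $\mathbb S^{n-1}\tilde{\times}\mathbb S^1$ via the connectedness of $GL^{\pm}(n,\mathbb R)$.
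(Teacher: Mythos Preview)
The paper does not prove this proposition at all: it is quoted verbatim as Theorem~2.1.3 from the monograph \cite{GrMePo2016} and used as a black box, so there is no ``paper's own proof'' to compare against.

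That said, your outline is the standard one and is essentially what the cited monograph does. A couple of small points to tighten. First, the claim that ``each orbit meets the interior of $K$ in exactly one point'' is not literally what you need for a fundamental domain in the paper's sense; you want a half-open shell $\tilde D$ (for instance $U_A\setminus f(U_A)$) whose $f$-iterates tile $\ell^s_A$ disjointly and whose closure is $K$. Second, in the sink case you should make explicit that $f$ cyclically permutes the $m$ punctured balls $W^s_{f^j(\omega)}\setminus\{f^j(\omega)\}$, so that $\ell^s_A/f$ is canonically identified with $(W^s_\omega\setminus\{\omega\})/f^{m}$; you state this but it is the key reduction. Finally, note that in the paper's conventions $\nu_p$ records the orientation behaviour of $f^{m_p}$ on $W^u_p$, whereas for a sink the relevant invariant manifold is $W^s_p$; since a sink has $q_p=0$ the statement is using $\nu_A$ loosely to mean the sign of $\det Df^{m}$ at the sink, which is what your $\det L$ captures. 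With these adjustments your argument is correct.
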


\section{TOPOLOGY OF $3$-MANIFOLDS ADMITTING DIFFEOMORPHISMS FROM THE CLASS $\mathcal G$}

Recall that $\mathcal G$ is a class of Morse-Smale diffeomorphisms $f:M^3 \to M^3$ defined on a closed connected $3$-manifold $M^3$ (not necessarily orientable), with non-wandering set $\Omega_f$ whose all saddle points have the same dimension of their unstable manifolds. 

This section is focused on the proof of the main result of this paper.

{\bf Theorem 1. } {\it Any closed connected $3$-manifold $M^3$, admitting a diffeomorphism $f \in \mathcal G$, is homeomorphic to the $3$-sphere.}

To prove the main result let us state some auxiliary facts.

\begin{remn}
    Further, without loss of generality, up to the power of the diffeomorphism, one may assume that $\Omega_f$ consists of fixed points only and for all $p \in \Omega_f$ the numbers $\nu_p$ and $\mu_p$ equal to $+1$. Moreover, for definiteness, we suppose  that the set $\Omega_1$ is empty.
\end{remn}

\begin{lem}\label{omega} For any diffeomorphism $f\in \mathcal G$, the set $\Omega_0$ consists of a unique sink.
\end{lem}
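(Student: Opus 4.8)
The plan is to present the union $W^{s}_{\Omega_{0}}$ of the basins of all sinks as the complement of a compact invariant set of dimension at most $n-2=1$, and then to invoke the fact that such a set cannot disconnect the connected manifold $M^{3}$. Since $W^{s}_{\Omega_{0}}$ is visibly a disjoint union of $|\Omega_{0}|$ nonempty open sets, its connectedness will immediately yield $|\Omega_{0}|=1$.

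By the Remark above we may assume $\Omega_{1}=\varnothing$, so that every saddle of $f$ lies in $\Omega_{2}$ and hence has a one-dimensional stable manifold (if instead all saddles had Morse index $1$, one argues with $f^{-1}$). Set $R=\Omega_{3}\cup W^{s}_{\Omega_{2}}$. This set is $f$-invariant, and the first step is to check that it is closed. Indeed $\Omega_{3}$ is finite, and for every stable separatrix $\ell^{s}_{\sigma}$ of a saddle $\sigma\in\Omega_{2}$ Proposition~\ref{th_2.1.1}(3) applied to $f^{-1}$ gives $\mathrm{cl}(\ell^{s}_{\sigma})\setminus(\ell^{s}_{\sigma}\cup\sigma)=\bigcup_{r\,:\,\ell^{s}_{\sigma}\cap W^{u}_{r}\neq\varnothing}W^{s}_{r}$; no sink can occur as such an $r$, because a point of $\ell^{s}_{\sigma}$ tends to $\sigma$ under $f$ while a sink is fixed, so each such $r$ is a source or an index-$2$ saddle and $\mathrm{cl}(\ell^{s}_{\sigma})\subseteq R$. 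As there are finitely many saddles, taking the union shows $\mathrm{cl}(R)=R$.

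The second step is to apply Proposition~\ref{AfR} to $f^{-1}$ with $\Omega_{A}=\Omega_{2}$: the sinks of $f^{-1}$ are precisely the sources $\Omega_{3}$ of $f$, and the unstable manifold of $\Omega_{2}$ for $f^{-1}$ is $W^{s}_{\Omega_{2}}$, so $R$ is an attractor of $f^{-1}$ (a repeller of $f$) with $\dim R=\max\{\,n-q_{p}:\ p\in\Omega_{2}\cup\Omega_{3}\,\}=1$. On the other hand Proposition~\ref{th_2.1.1}(1) applied to $f^{-1}$ gives $M^{3}=W^{s}_{\Omega_{0}}\sqcup W^{s}_{\Omega_{2}}\sqcup W^{s}_{\Omega_{3}}$, and since $W^{s}_{\Omega_{3}}=\Omega_{3}$ this is exactly $M^{3}=W^{s}_{\Omega_{0}}\sqcup R$, whence $W^{s}_{\Omega_{0}}=M^{3}\setminus R$. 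Because $\dim R\le 1=n-2$, Proposition~\ref{n-2} shows $R$ does not divide $M^{3}$, so $W^{s}_{\Omega_{0}}$ is connected; it is also nonempty, every Morse--Smale diffeomorphism having at least one sink. Finally $W^{s}_{\Omega_{0}}=\bigsqcup_{\omega\in\Omega_{0}}W^{s}_{\omega}$ is a disjoint union of nonempty open subsets of $M^{3}$ (the basins of distinct sinks are disjoint and open), so its connectedness forces $\Omega_{0}$ to consist of a single point.

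The argument is short and I do not expect a serious obstacle; the one point requiring care is the closedness of $R$ — equivalently, the verification that the heteroclinic limits of a one-dimensional stable separatrix involve only sources and index-$2$ saddles — and this is precisely where the hypothesis $\Omega_{1}=\varnothing$ is used. Conceptually, the decisive feature is that under this assumption the ``repelling part'' $R$ of the phase space has codimension at least $2$, which is exactly what rules out the coexistence of several sink basins.
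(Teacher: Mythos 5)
Your proposal is correct and follows essentially the same route as the paper: form the repeller $R=W^s_{\Omega_2}\cup\Omega_3$, note via Proposition~\ref{AfR} that $\dim R=1$, identify $M^3\setminus R$ with $W^s_{\Omega_0}$ by Proposition~\ref{th_2.1.1}, and conclude connectedness (hence a single sink) from Proposition~\ref{n-2}. Your write-up simply supplies more detail than the paper does, in particular the explicit verification that $R$ is closed.
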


\begin{proof} Let $$R = W^s_{\Omega_2} \cup \Omega_3.$$ By virtue of Statement \ref{AfR}, the set $R$ is a repeller of the diffeomorphism $f$ and $\dim R=1$. It follows from  Statement \ref{n-2} that $M^3 \setminus R$ is connected. On the other hand, according to  Statement \ref{th_2.1.1}, $M^3\setminus R = W_{\Omega_0}^s$. From the above we conclude that the set $\Omega_0$ consists of a unique sink.
\end{proof}

Let us denote by $\omega$ the unique sink of the diffeomorphism $f\in \mathcal G$.

\begin{lem}\label{sigma} In the non-wandering set of any diffeomorphism $f \in \mathcal G$ there exists a saddle $\sigma$ such that $\ell_\sigma^u \subset \ell_\omega^s$.
\end{lem}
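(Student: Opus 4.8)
The plan is to let $\sigma$ be a saddle point of $f$ that is minimal with respect to the Smale order $\prec$ among all saddles of $f$. Such a point exists because $\Omega_f$ is finite and, by Lemma~\ref{omega} together with the Remark, $\Omega_f=\{\omega\}\cup\Omega_2\cup\Omega_3$, which we may assume contains a saddle (if $\Omega_2=\varnothing$ the statement is vacuous, while $M^3\cong\mathbb S^3$ already by Statement~\ref{NS_diff}). Equivalently, one may take $\sigma$ to be the point of the orbit $\mathcal O_2$ of a dynamical numbering, since by Lemma~\ref{omega} and the assumption $\Omega_1=\varnothing$ one has $\mathcal O_1=\{\omega\}$, so $\mathcal O_2$ is forced to be a saddle.

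The first step is to check that $\ell^u_\sigma$ has no heteroclinic intersections. Since every saddle of $f$ has a $2$-dimensional unstable manifold, $W^u_\sigma\setminus\sigma\cong\mathbb R^2\setminus\{0\}$ is connected, so $\sigma$ has a unique unstable separatrix and $\ell^u_\sigma=W^u_\sigma\setminus\sigma$. If $\ell^u_\sigma$ met $W^s_{\sigma'}$ for some saddle $\sigma'\neq\sigma$, then $W^u_\sigma\cap W^s_{\sigma'}\neq\varnothing$, i.e. $\mathcal O_{\sigma'}\prec\mathcal O_\sigma$, contradicting the minimality of $\sigma$ among the saddles. I expect this to be the step requiring the most care: it relies on the separatrix being all of $W^u_\sigma\setminus\sigma$ (which is exactly where the hypothesis $q_\sigma=2$, i.e. $\Omega_1=\varnothing$, enters) and on the precise definition of $\prec$; everything else is bookkeeping with the cited structural facts.

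The second step finishes the argument. Since $\ell^u_\sigma$ has no heteroclinic intersections, Statement~\ref{sep_sph} gives $\mathrm{cl}(\ell^u_\sigma)\setminus(\ell^u_\sigma\cup\sigma)=\{\omega'\}$ for some sink $\omega'$, and by Lemma~\ref{omega} the only sink of $f$ is $\omega$, so $\omega'=\omega$. Combining this with Statement~\ref{th_2.1.1}(3), the set $\bigcup_{r\in\Omega_f:\,\ell^u_\sigma\cap W^s_r\neq\varnothing}W^u_r$ equals the singleton $\{\omega\}=W^u_\omega$, and since the unstable manifolds of distinct periodic points are disjoint and nonempty, $\omega$ is the only periodic point whose stable manifold meets $\ell^u_\sigma$. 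As every $x\in\ell^u_\sigma$ lies in the stable manifold of its (single) $\omega$-limit point, it follows that $\ell^u_\sigma\subset W^s_\omega$; and since $\omega\notin W^u_\sigma\supset\ell^u_\sigma$, we conclude $\ell^u_\sigma\subset W^s_\omega\setminus\{\omega\}=\ell^s_\omega$, as required.
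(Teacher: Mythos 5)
Your proposal is correct and follows essentially the same route as the paper: the paper also takes $\sigma$ to be the first saddle in a dynamical numbering (equivalently, minimal for the Smale order among saddles), uses that minimality to rule out intersections of $\ell^u_\sigma$ with stable manifolds of points other than $\omega$, and concludes via the fact that every point lies in some stable manifold. Your extra detour through Statement~\ref{sep_sph} and Statement~\ref{th_2.1.1}(3) is harmless but unnecessary, and note only that when there are no saddles the lemma's existential claim is false rather than vacuous (the paper shares this implicit assumption, which is covered by the induction base in Theorem~1).
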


\begin{proof} By  Lemma \ref{omega}, the fixed points of the diffeomorphism $f$ admit the following dynamical order:
\begin{equation}\label{order}
    \begin{gathered}
    \omega \prec \sigma_1 \prec \cdots \prec \sigma_k \prec \alpha_1 \prec \cdots \prec \alpha_s,\\
    \text{where } \Omega_2 = \{ \sigma_1, \cdots, \sigma_k \},\, \Omega_3 = \{ \alpha_1, \cdots, \alpha_s \}.
    \end{gathered}
\end{equation}
Assume that $\sigma = \sigma_1$. Then, it follows from order (\ref{order}) that $$\forall~ p \in \Omega_f\setminus \omega \implies \ell_\sigma^u \cap W_p^s = \varnothing.$$
In the other words, $\ell_\sigma^u$ can only intersect with $W_\omega^s$. By Statement \ref{th_2.1.1} (1), any point $x \in \ell_\sigma^u$ has to lie on the stable manifold of some fixed point. Hence, $\ell_\sigma^u \subset \ell_\omega^s$.
\end{proof}

Further, let the saddle $\sigma \in \Omega_2$ satisfies the conclusion of Lemma \ref{sigma}, and let $\Sigma_\sigma = \mathrm{cl}(\ell_\sigma^u)$. It follows from Statements \ref{sep_sph} and \ref{th_2.1.1} (2) that $\Sigma_\sigma = \ell_\sigma^u \cup \{\omega\} \cup \{\sigma\}$ is an embedding of the two-dimensional sphere (see Fig.  \ref{fig:Sigma}). This embedding is smooth everywhere except, maybe, the point $\omega$. Let $\mathcal M_\sigma = M^3 \setminus \Sigma_\sigma$. 
\begin{figure}[!ht]
\begin{center}
\includegraphics[width=0.3\textwidth]{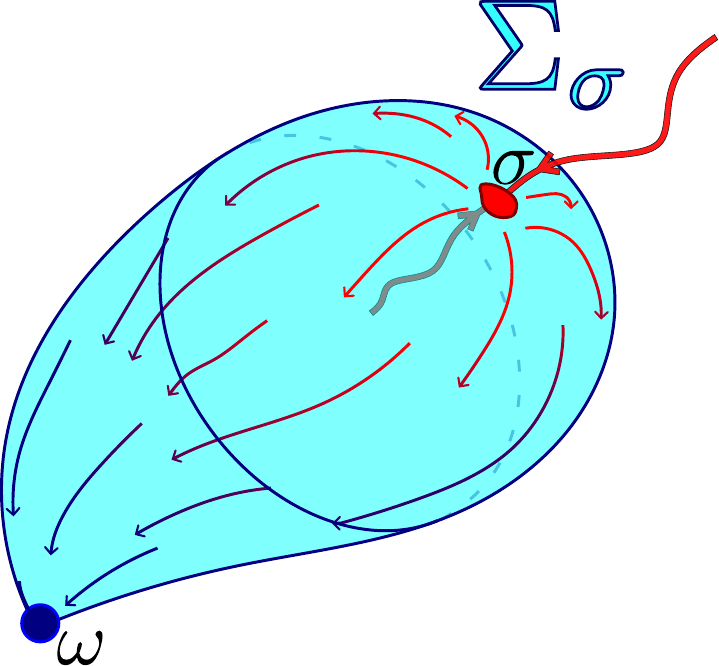}
    \caption{Sphere $\Sigma_\sigma$}\label{fig:Sigma}
\end{center}
\end{figure}

\begin{lem}\label{msi} The manifold $\mathcal{M}_\sigma$ is disconnected.
\end{lem}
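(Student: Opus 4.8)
The plan is to exhibit a connected set that is ``separated'' by $\Sigma_\sigma$ inside $M^3$, or rather to argue directly about the components of $\mathcal M_\sigma$. Recall that $\Sigma_\sigma = \ell^u_\sigma \cup \{\sigma\} \cup \{\omega\}$ is a topologically embedded $2$-sphere, smooth except possibly at $\omega$. The key geometric input is that the two stable separatrices $\ell^{s,1}_\sigma, \ell^{s,2}_\sigma$ of $\sigma$ approach $\sigma$ from the two ``sides'' of the local unstable manifold. More precisely, in a linearizing neighborhood $U_\sigma$ of $\sigma$ the set $U_\sigma \setminus W^u_{\sigma,\mathrm{loc}}$ has exactly two connected components (because $\dim W^u_\sigma = 2$ and $\dim W^s_\sigma = 1$, so $W^u_{\sigma,\mathrm{loc}}$ locally separates the $3$-ball $U_\sigma$), and each stable separatrix lies in a distinct one of them. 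I would first fix notation for these two local components $B_1, B_2$ of $U_\sigma \setminus \Sigma_\sigma$.

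Next I would show $B_1$ and $B_2$ lie in different connected components of $\mathcal M_\sigma$. Suppose for contradiction they lie in the same component; then there is a path $\gamma$ in $\mathcal M_\sigma = M^3 \setminus \Sigma_\sigma$ joining a point of $B_1$ to a point of $B_2$. Concatenating $\gamma$ with short arcs inside $U_\sigma$ running from $\sigma$ out into $B_1$ and into $B_2$ (these arcs meet $\Sigma_\sigma$ only at the endpoint $\sigma$) produces a loop in $M^3$ that crosses $\Sigma_\sigma$ exactly once transversally. This contradicts the fact that a closed (topologically embedded, locally flat away from one point) $2$-sphere in a manifold has trivial mod-$2$ intersection number with any loop — more carefully, I would run the argument using a collar of $\Sigma_\sigma$: by Proposition \ref{NoWS} the sphere $\Sigma_\sigma$ has a neighborhood $K \cong \mathbb S^2 \times [0,1]$, so $K \setminus \Sigma_\sigma$ has two components $K_-, K_+$, and the two local sides $B_1, B_2$ (after possibly shrinking $U_\sigma$) sit in $K_- $ and $K_+$ respectively; any path in $\mathcal M_\sigma$ from $B_1$ to $B_2$ would have to leave $K$ and re-enter it on the other side, but $M^3 \setminus K$ need not be connected, so instead I phrase it as: the component of $\mathcal M_\sigma$ containing $B_1$ and the one containing $B_2$ are distinguished by which component of $K \setminus \Sigma_\sigma$ they meet, and these are different.

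The cleanest way to make this rigorous, which I would actually write up, is: let $\mathcal M_\sigma = \mathcal M_1 \sqcup \mathcal M_2 \sqcup \cdots$ be the decomposition into connected components, and note $M^3 = \Sigma_\sigma \sqcup \bigsqcup_i \mathcal M_i$. Using the collar $K \cong \mathbb S^2 \times (-1,1)$ of $\Sigma_\sigma$ (with $\Sigma_\sigma = \mathbb S^2 \times \{0\}$), each punctured side $\mathbb S^2 \times (0,1)$ and $\mathbb S^2 \times (-1,0)$ is connected and disjoint from $\Sigma_\sigma$, hence each lies entirely in a single component $\mathcal M_i$. If both lay in the same component $\mathcal M_1$, then $\Sigma_\sigma \cup \mathcal M_1 \cup \{\text{collar}\}$ would be an open and closed proper subset of the connected manifold $M^3$ (its complement being the union of the other $\mathcal M_i$, which is open; and it is open since it is a union of $\mathcal M_1$, an open set, with the collar neighborhood of $\Sigma_\sigma$) — contradiction unless there are no other components, i.e. $\mathcal M_\sigma = \mathcal M_1$; but then removing the locally flat points of $\Sigma_\sigma$ would still locally disconnect, a contradiction. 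So the two sides lie in distinct components, proving $\mathcal M_\sigma$ is disconnected.

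The main obstacle is the single point of wildness at $\omega$: one cannot naively invoke local flatness of $\Sigma_\sigma$ to get a bicollar, which is why Proposition \ref{NoWS} (giving a genuine product neighborhood $\mathbb S^2 \times [0,1]$ of the whole sphere despite the wild point) is essential, and the argument must be organized so that all the ``crossing'' bookkeeping happens inside that collar rather than near $\omega$ or $\sigma$ individually. A secondary technical point to be careful about is that $M^3$ is not assumed orientable, so the separation argument must be mod-$2$ / connectedness-based rather than using an orientation class or signed intersection number — the formulation above via the collar and connected components avoids orientation entirely.
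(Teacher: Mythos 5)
Your proposal has a genuine gap at its crucial step. After correctly reducing to the case where both sides of the collar $K\cong\mathbb S^2\times[0,1]$ lie in one component, so that $\mathcal M_\sigma$ would be connected, you dismiss this with ``removing the locally flat points of $\Sigma_\sigma$ would still locally disconnect, a contradiction.'' This is not a contradiction: local separation does not imply global separation. The sphere $\mathbb S^2\times\{\mathrm{pt}\}\subset\mathbb S^2\times\mathbb S^1$ has a genuine product neighborhood, locally disconnects it, and yet has connected complement. The lemma is precisely the assertion that $\Sigma_\sigma$ is a \emph{separating} sphere, which is false for general embedded spheres in $3$-manifolds, so it cannot follow from the existence of a bicollar alone. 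Your earlier variant via intersection numbers is circular for the same reason: the statement that every loop has trivial mod-$2$ intersection number with $\Sigma_\sigma$ is equivalent to $[\Sigma_\sigma]=0$ in $H_2(M^3;\mathbb Z/2)$, i.e.\ essentially to the separation you are trying to prove; a non-separating sphere admits a dual loop crossing it transversally exactly once.

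The missing global input is dynamical, and this is where the paper's proof does its real work. One passes to the orbit space $\hat\ell^s_\omega=\ell^s_\omega/f\cong\mathbb S^2\times\mathbb S^1$ of the sink basin; there $\ell^u_\sigma$ projects to a tamely embedded $2$-torus $\hat\ell^u_\sigma$ with $i_*(\pi_1(\hat\ell^u_\sigma))\neq 0$ (because the covering homomorphism $\eta_\omega$ is nontrivial on it). Proposition \ref{T2} then forces this torus to bound a solid torus, hence to separate $\mathbb S^2\times\mathbb S^1$, and lifting paths through the cover $p_\omega$ shows that $\mathcal L_\sigma=\ell^s_\omega\setminus\ell^u_\sigma$ is disconnected. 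Finally, since the repeller $R=W^s_{\Omega_2}\cup\Omega_3$ has dimension $1$ and $\mathcal M_\sigma\setminus R=\mathcal L_\sigma$, Proposition \ref{n-2} shows that a connected $\mathcal M_\sigma$ would force $\mathcal L_\sigma$ to be connected --- contradiction. Your proposal contains no substitute for the solid-torus/orbit-space step, which is exactly what rules out the $\mathbb S^2\times\mathbb S^1$-type behavior your argument cannot exclude.
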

\begin{proof} Since for any manifold the notions of connectivity and path connectivity are equivalent, they will be used interchangeably hereafter.

{\bf Step 1. } First of all, let us proof that the set $\mathcal L_\sigma = \ell_\omega^s \setminus \ell_\sigma^u$ is disconnected. Suppose the contrary: any two distinct points $x,y \in \mathcal L_\sigma$ can be connected by a path in $\mathcal L_\sigma$ (see Fig. \ref{fig:Ligma}).
\begin{figure}[!ht]
    \begin{minipage}{0.45\textwidth}
        \centering
\includegraphics[width=\textwidth]{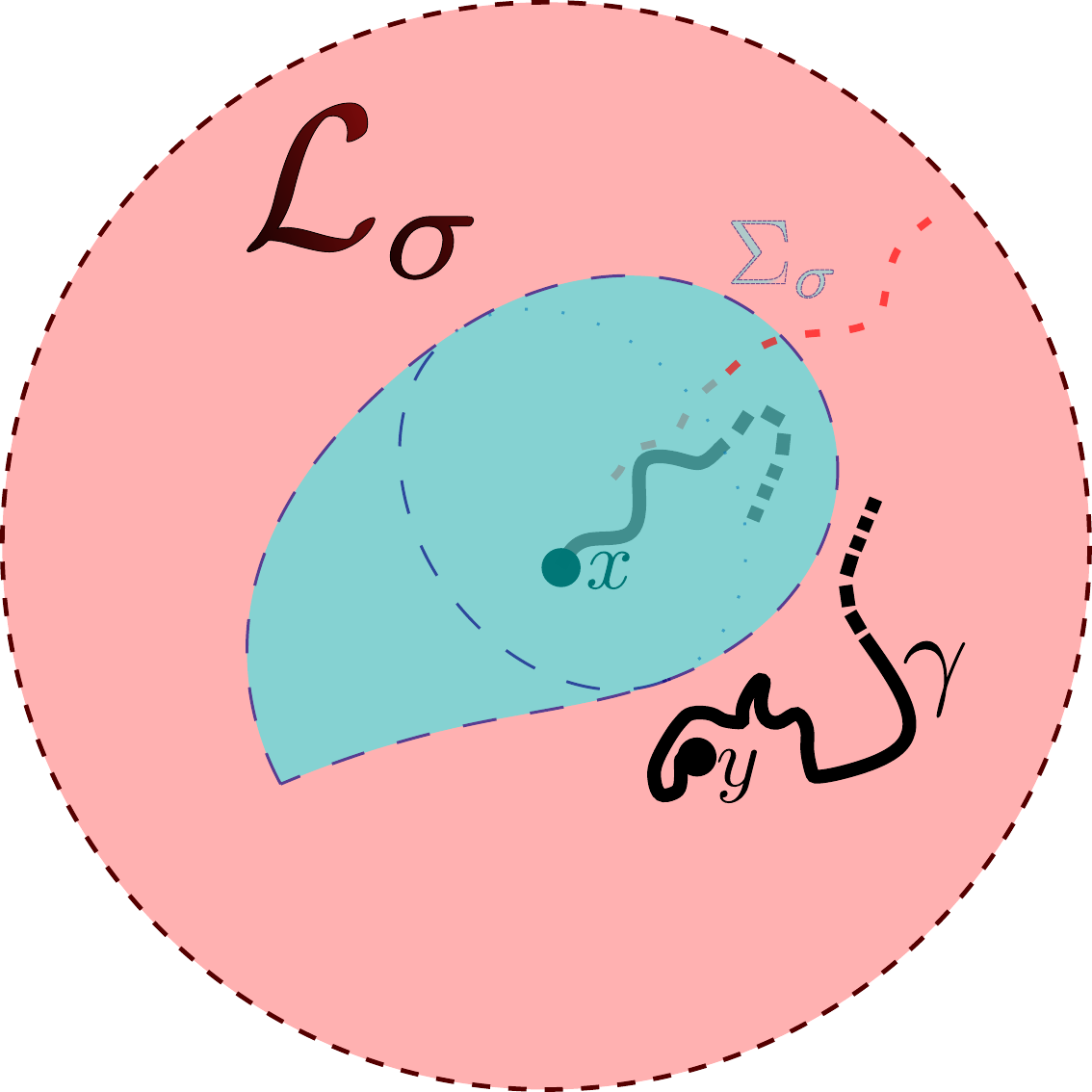}
        \caption{The space $\mathcal L_\sigma.$}\label{fig:Ligma}
    \end{minipage}
    \hfill
    \begin{minipage}{0.45\textwidth}
	\centering
\includegraphics[width=\textwidth]{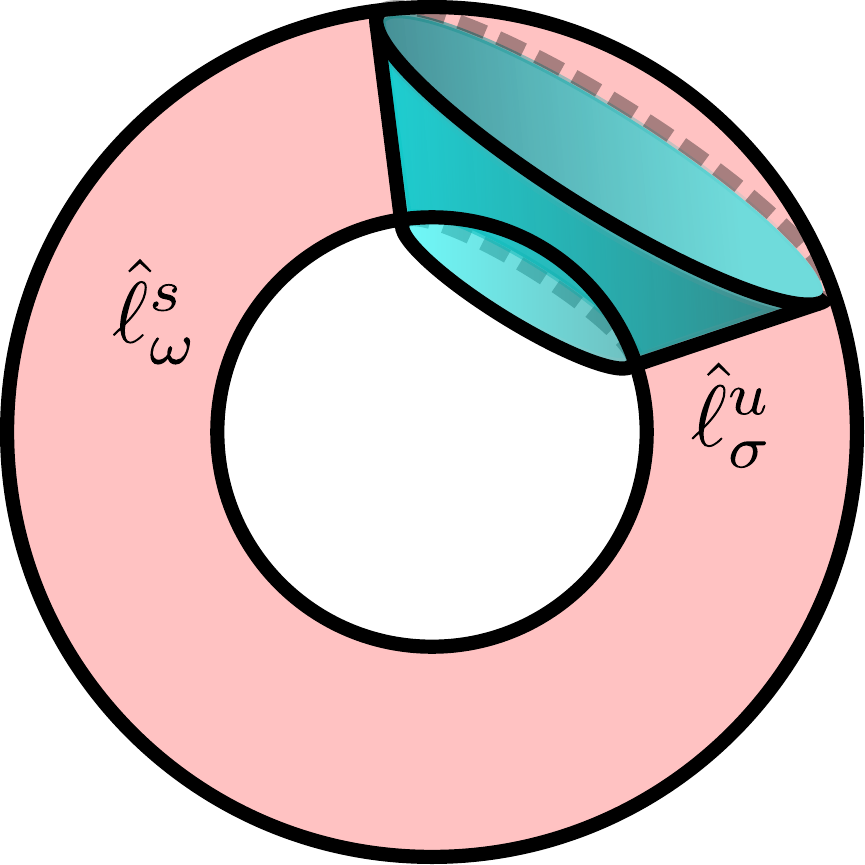}
	\caption{The orbits space of the sink basin}\label{fig:orbit}
    \end{minipage}
\end{figure}

Consider the orbits space $\hat \ell_\omega^s = \ell_\omega^s/f$ of the sink $\omega$ and put $p_\omega = p_{\hat \ell_\omega^s}:\ell_\omega^s \to \hat \ell_\omega^s$, $\eta_\omega = \eta_{\hat \ell_\omega^s}: \pi_1(\hat \ell_\omega^s) \to \mathbb Z$. By  Statement \ref{T_2.1.3.}, the map $p_\omega$ is a cover,  $\hat \ell_\omega^s$ is homeomorphic to $\mathbb S^2 \times \mathbb S^1$ and $\hat \ell_\sigma^u$ is homeomorphic to the two-dimensional torus (see Fig. \ref{fig:orbit}).

Since $\ell_\sigma^u \subset \ell_\omega^s$, then $\hat \ell_\sigma^u \subset \hat \ell_\omega^s$. Moreover, $\hat \ell_\sigma^u = p_\omega(\ell_\sigma^u)$ that, by Statement \ref{th_2.1.1},   implies that $\hat \ell_\sigma^u$ is a smooth embedding of the 2-torus into $\hat \ell_\omega^s$ (see Fig. \ref{fig:orbit}). By  Statement \ref{T_2.1.3.}, homomorphism $\eta_\omega$ is non-trivial and it follows from its definition that $i_*(\pi_1(\hat \ell_\sigma^u)) \neq 0$. 

Then, using  Statement \ref{T2}, one may conclude that $\hat \ell_\sigma^u$ bounds in $\hat \ell_\omega^s$ a solid torus and, consequently, it divides this orbits space into two connected components. Let us choose a point in each component and denote them $\hat x$ and $\hat y$. From their pre-images we take two points $x \in p_\omega^{-1}(\hat x)$ and $y \in p_\omega^{-1}(\hat y)$. Since we assumed that $\mathcal{L}_\sigma$ is path-connected then there exists a path $\gamma :[0,1] \to \mathcal L_\sigma : \gamma(0) = x,~ \gamma(1) = y$. Then, by continuity of $p_\omega$, the map $\hat \gamma = p_\omega\gamma: [0,1] \to \hat \ell_\omega^s \setminus \hat \ell_\sigma^u$ is a path between $\hat x$ and $\hat y$ in $\hat \ell_\omega^s \setminus \hat \ell_\sigma^u$, that is a contradiction.

Thus, $\mathcal L_\sigma$ is disconnected.

{\bf Step 2. } Let us prove that $\mathcal M_\sigma = M^3 \setminus \Sigma_\sigma$ is not connected. Suppose the contrary: it is connected. Let us note that $\dim\,\mathcal M_\sigma=3$, because it is an open subset of the manifold $M^3$. Then, by  Statement \ref{n-2}, $\mathcal M_\sigma\setminus R$ is  connected. On the other hand, $\mathcal M_\sigma\setminus R=(M^3 \setminus \Sigma_\sigma) \setminus R = W^s_\omega \setminus \Sigma_\sigma=\mathcal L_\sigma$ and it contradicts the conclusion of the previous step. So, $\mathcal M_\sigma$ is disconnected.
\end{proof}

Let us introduce a diffeomorphism $a:\mathbb R^3 \to \mathbb R^3$ by the rule $a(x,y,z) = (\frac{x}{2},\frac{y}{2},\frac{z}{2})$. It has a unique non-wandering point, a sink $O(0,0,0)$. Let $\ell = \mathbb R^3 \setminus O$. 

As well as before, let $f \in \mathcal G$, $\sigma$ satisfies the conclusion of Lemma \ref{sigma} and $\Sigma_\sigma = \mathrm{cl}(\ell_\sigma^u)$. By  Statement \ref{AfR}, sphere ${\Sigma_\sigma}$ is an attractor of diffeomorphism $f$ with the basin $W_{\Sigma_\sigma}^s = W_\sigma^s \cup W_\omega^s$. Let $\ell_{\Sigma_\sigma}^s = W_{\Sigma_\sigma}^s \setminus \Sigma_\sigma$.

\begin{lem}\label{lel} The manifold $\ell_{\Sigma_\sigma}^s$ consists of two connected components $\ell_1$, $\ell_2$, and for each of the components $\ell_i$ there exists a diffeomorphism $h_i: \ell_i \to \ell$, conjugating $\left. f \right|_{\ell_i}$ with $\left. a \right|_{\ell}$.
\end{lem}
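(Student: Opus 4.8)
The plan is to combine Lemma~\ref{msi} with the local dynamics near the sink $\omega$ and the saddle $\sigma$ to show that $\ell_{\Sigma_\sigma}^s$ has exactly two components, each of which is dynamically a ``pierced $\mathbb{R}^3$'' on which $f$ acts as a linear contraction. First I would recall from Statement~\ref{AfR} that $\Sigma_\sigma$ is an attractor with basin $W^s_{\Sigma_\sigma} = W^s_\sigma \cup W^s_\omega$, so $\ell^s_{\Sigma_\sigma} = (W^s_\sigma \cup W^s_\omega) \setminus \Sigma_\sigma$. Since $W^s_\omega \setminus \Sigma_\sigma = \mathcal{L}_\sigma$ and $W^s_\sigma \setminus \Sigma_\sigma = W^s_\sigma \setminus \ell^s_\sigma \cap \Sigma_\sigma$ — but actually $\ell^u_\sigma \subset \ell^s_\omega$, so $\Sigma_\sigma \cap W^s_\sigma = \{\sigma\}$ and $W^s_\sigma \setminus \Sigma_\sigma = W^s_\sigma \setminus \{\sigma\} = \ell^s_\sigma$, which is the disjoint union of the two stable separatrices of the saddle (here $\dim W^s_\sigma = 2$, so $\ell^s_\sigma$ has two components, each a punctured plane). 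I would then argue that $\ell^s_{\Sigma_\sigma}$ is an open $3$-manifold (an open subset of $M^3$) and identify its components: by the previous paragraph's decomposition, and by tracking which stable separatrix of $\sigma$ accumulates on which side of $\Sigma_\sigma$, the set $\ell^s_{\Sigma_\sigma}$ splits as the union of $\mathcal{L}_\sigma$ (disconnected, with two components by Step~1 of Lemma~\ref{msi}) together with the two separatrices $\ell^{s,1}_\sigma, \ell^{s,2}_\sigma$; each separatrix, being part of the boundary-accumulation of exactly one component of $\mathcal{L}_\sigma$, glues on to give precisely two components $\ell_1, \ell_2$.

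Next, for the conjugacy statement, I would use the orbit-space technology of Proposition~\ref{T_2.1.3.} and Proposition~\ref{cycle}. Each $\ell_i$ is an $f$-invariant open subset of the basin on which $f$ acts freely and properly discontinuously (the action on a basin minus the attractor always has these properties, since points escape to $\infty$ in forward and backward time relative to the attractor). So $\ell_i / f$ is a closed $3$-manifold and $p_i : \ell_i \to \ell_i/f$ is a covering with induced homomorphism $\eta_i : \pi_1(\ell_i/f) \to \mathbb{Z}$. The model $\ell = \mathbb{R}^3 \setminus O$ with the contraction $a$ has orbit space $\ell/a$ homeomorphic to $\mathbb{S}^2 \times \mathbb{S}^1$ with $\eta_{\ell/a}$ the projection $\pi_1(\mathbb{S}^2 \times \mathbb{S}^1) \cong \mathbb{Z} \to \mathbb{Z}$ an isomorphism. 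Thus it suffices to prove that $\ell_i/f$ is homeomorphic to $\mathbb{S}^2 \times \mathbb{S}^1$ by a homeomorphism carrying $\eta_i$ to $\eta_{\ell/a}$; then Proposition~\ref{cycle} delivers the conjugating diffeomorphism $h_i$. (Strictly, Proposition~\ref{cycle} is stated for two diffeomorphisms on the same invariant set $X$; I would apply it after first building \emph{some} homeomorphism $\ell_i \to \ell$ realizing the matching of orbit spaces, or invoke the analogous ``realization'' lemma from \cite{GrMePo2016} — the needed statement is that matching orbit spaces with matching $\eta$'s lifts to an equivariant homeomorphism.)

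To identify $\ell_i/f$ I would show it is a closed orientable (locally $\mathbb{S}^2$-fibered) $3$-manifold that admits a map to $\mathbb{S}^1$ inducing the surjection $\eta_i$, with fiber a $2$-sphere; concretely, a fundamental domain $D_{\ell_i}$ of the action on $\ell_i$ is, by the attractor structure, diffeomorphic to $\Sigma_i \times [0,1]$ where $\Sigma_i$ is the ``trace'' of $\Sigma_\sigma$ as seen from the $i$-th side — and this trace is a $2$-sphere because $\Sigma_\sigma$ is a $2$-sphere and locally (away from $\omega$, which lies \emph{on} $\Sigma_\sigma$ and not in $\ell_i$) the attractor looks like a collared sphere, so Proposition~\ref{NoWS} gives a genuine $\mathbb{S}^2 \times [0,1]$ neighborhood on each side. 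Gluing $D_{\ell_i}$ by $f$ then yields an $\mathbb{S}^2$-bundle over $\mathbb{S}^1$, and since we reduced to the case $\nu_\sigma = \nu_\omega = +1$ in the Remark, the bundle is orientable, hence $\ell_i/f \cong \mathbb{S}^2 \times \mathbb{S}^1$, with $\eta_i$ the bundle projection on $\pi_1$.

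I expect the main obstacle to be precisely the structure of $\ell^s_{\Sigma_\sigma}$ near the saddle point $\sigma$: unlike the sink basin $\ell^s_\omega$ (handled in Lemma~\ref{msi} via $\mathbb{S}^2 \times \mathbb{S}^1$), the attractor $\Sigma_\sigma$ contains the saddle $\sigma$, so near $\sigma$ the basin $W^s_{\Sigma_\sigma}$ picks up the full $2$-dimensional stable manifold $W^s_\sigma$, and one must check carefully that removing $\Sigma_\sigma$ cuts $W^s_\sigma$ into its two separatrices \emph{and} that each attaches to exactly one of the two components of $\mathcal{L}_\sigma$ coming from Lemma~\ref{msi}, so that no new component is created and none is merged. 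This is a local linearization argument at $\sigma$ (the local unstable separatrix $\ell^u_\sigma$ lies in $\Sigma_\sigma$, and a small punctured neighborhood of $\sigma$ in $W^s_{\Sigma_\sigma} \setminus \Sigma_\sigma$ has exactly two components, one on each local side of the $2$-disk $W^u_{\sigma,loc}$), together with the observation that each local side limits, under backward iteration toward $\Sigma_\sigma$ and forward iteration away from it, onto a definite one of the two halves into which $\Sigma_\sigma$ separates a neighborhood. Getting the bookkeeping of ``two and only two components'' exactly right — and then verifying the collar/sphere-bundle claim uniformly up to and including a neighborhood of $\sigma$ — is the technical heart of the lemma.
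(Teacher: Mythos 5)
Your overall strategy is the paper's: use Proposition \ref{NoWS} to get a product neighborhood of $\Sigma_\sigma$, pass to the orbit space of a fundamental domain to identify $\hat\ell_i\cong\mathbb S^2\times\mathbb S^1$, normalize the homomorphisms $\eta$, and invoke Proposition \ref{cycle} to lift a diffeomorphism of orbit spaces to the conjugacy $h_i$. The second half of your argument (the $\eta$-matching and the lift) is exactly what the paper does. But there is a genuine gap at the step you yourself flag as the technical heart, and it is not closed by the local linearization at $\sigma$ that you propose. The difficulty is global, not local: $\Sigma_\sigma$ may be wildly embedded at $\omega$, so you cannot take a collar of $\Sigma_\sigma$ on each side, and Proposition \ref{NoWS} does \emph{not} give an $\mathbb S^2\times[0,1]$ neighborhood ``on each side'' --- it gives a single product neighborhood $K_\sigma\cong\mathbb S^2\times[0,1]$ of the whole sphere, with $\Sigma_\sigma$ sitting somewhere inside it. Your assertion that a fundamental domain of $f$ on $\ell_i$ is $\Sigma_i\times[0,1]$ for a spherical ``trace'' $\Sigma_i$ is precisely the statement that needs proof. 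The paper obtains it by combining two inputs: (i) Lemma \ref{msi}, which guarantees that $\Sigma_\sigma$ separates the two boundary components of $K_\sigma$, and (ii) the generalized annulus-type theorem of \cite{Grines2003} (Theorem 3.3), which then yields $K_\sigma\setminus\mathrm{int}\,f(K_\sigma)\cong\mathbb S^2\times[0,1]\sqcup\mathbb S^2\times[0,1]$. Without an ingredient of type (ii), ``gluing $D_{\ell_i}$ by $f$ yields an $\mathbb S^2$-bundle over $\mathbb S^1$'' is unjustified: a priori the region between two disjoint tame $2$-spheres in a $3$-manifold need not be a product, and this is exactly the kind of statement that fails for wild embeddings.

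Two smaller inaccuracies. First, Step 1 of Lemma \ref{msi} proves only that $\mathcal L_\sigma$ is disconnected, not that it has exactly two components; the count of exactly two (and the connectedness of each $\ell_i$) comes out of the fundamental-domain computation together with the fact that each $\eta_{\hat\ell_i}$ is an isomorphism, so your bookkeeping of attaching the two stable separatrices of $\sigma$ to two putative components of $\mathcal L_\sigma$ rests on an unproved premise and is in any case unnecessary once the $K_\sigma$-argument is in place. Second, your worry about Proposition \ref{cycle} being stated for a single invariant set is resolved the same way the paper resolves it: one first produces a diffeomorphism $\hat h_i:\hat\ell_i\to\hat\ell$ (smooth closed $3$-manifolds that are homeomorphic are diffeomorphic, by Munkres), corrects it by $\theta(s,r)=(s,-r)$ if needed so that $\eta_{\hat\ell}\hat h_i=\eta_{\hat\ell_i}$, and then lifts; so that part of your plan is sound as written.
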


\begin{proof}  By virtue of  Statement \ref{T_2.1.3.}, the orbits space $\hat{\ell}_{\Sigma_{\sigma}}^s = \ell_{\Sigma_{\sigma}^s}/f$ is a smooth closed $3$-manifold.
Let us prove that $\hat \ell^s_{\Sigma_\sigma} \cong \mathbb S^2 \times \mathbb S^1 \sqcup \mathbb S^2 \times \mathbb S^1$.  

By Statement \ref{NoWS}, the attractor $\Sigma_\sigma$ has a neighborhood $K_\sigma \subset W_{\Sigma_\sigma}^s$ diffeomorphic to $\mathbb S^2 \times [0,1]$ (see Fig. \ref{fig:Kigma}). 
    \begin{figure}[!ht]
        \centering
        \includegraphics[width = 0.4\textwidth]{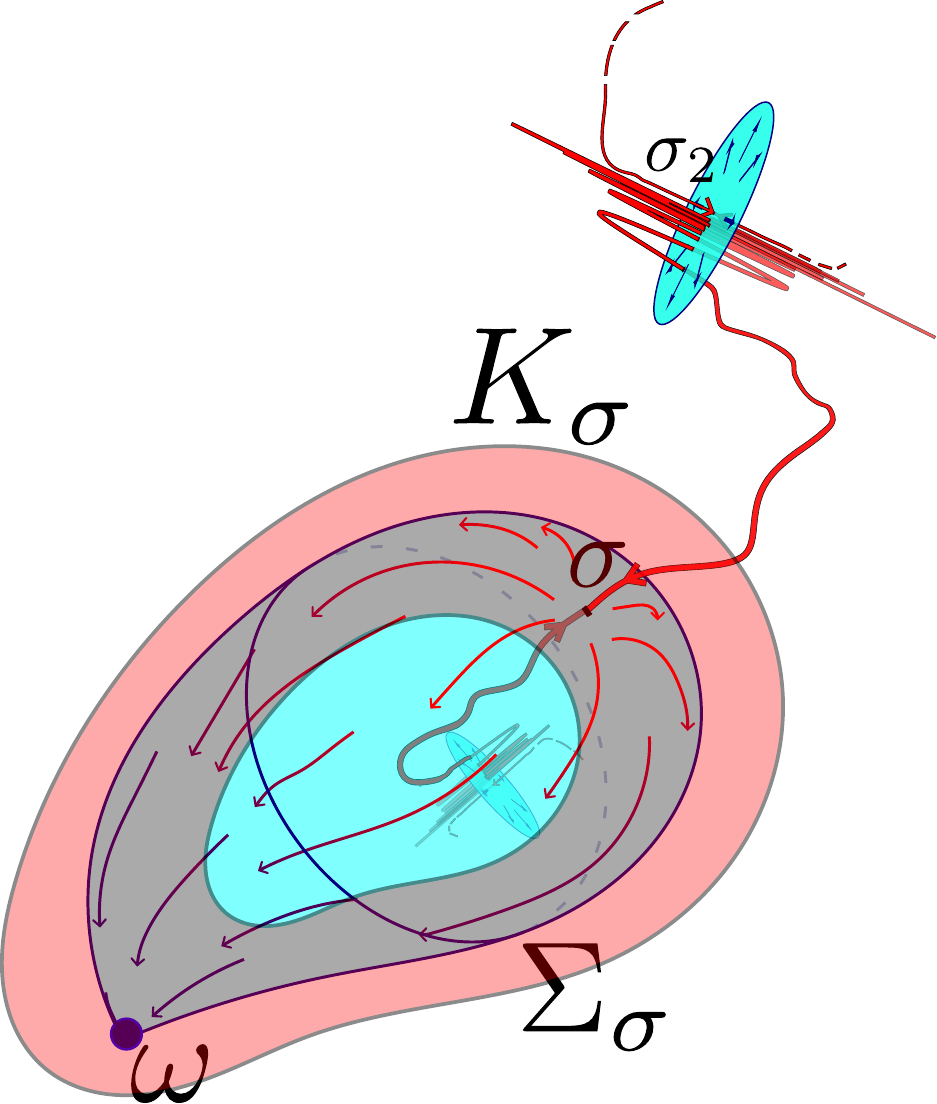}
        \caption{The neighbohood $K_\sigma \cong \mathbb S^2 \times [0,1]$ of the sphere $\Sigma_\sigma$}
        \label{fig:Kigma}
    \end{figure}
    Let us show that there exists a natural number $N$ such that $f^N(x) \in \mathrm{int}\, K_\sigma$ for any $x \in K_\sigma$. Since $\partial K_\sigma \subset W^s_{\Sigma_\sigma}$, then for all $x\in \partial K$ there exist such a closed neighborhood $U_x \subset \partial K_\sigma$ and a natural number $\nu_x$ that for any $\nu \geq \nu_x$ it is true that $f^\nu(U_x) \subset \mathrm{int} K_\sigma$. Due to the compactness of $\partial K_\sigma$, there exists a finite subcover of $\partial K_\sigma$ in $\{U_x, x \in \partial K_\sigma \}$. Thus, one may choose the desired number $N$ as the maximum of numbers $\nu_x$ corresponding to the neighbourhoods of $U_x$ in the chosen subcover. Without loss of generality, we assume the number $N$ to be $1$, then $f(K_{\Sigma_\sigma}) \subset \mathrm{int} \, K_\sigma$  (see Fig. \ref{fig:attraction}). It follows from  Lemma \ref{msi} that the sphere $\Sigma_\sigma$ separates in $K_\sigma$ the connected components of its boundary. Whence, according to [\cite{Grines2003}, Theorem 3.3], $K_{\sigma}\setminus \mathrm{int} \, f(K_{\sigma}) \cong \mathbb{S}^2 \times [0,1] \sqcup\mathbb{S}^2 \times [0,1]$. It follows from the construction that the manifold $K_{\sigma}\setminus \mathrm{int} \, f(K_{\sigma})$ is a fundamental domain of the action of $f$ on the space $\ell_{\Sigma_\sigma}^s$. Then by  Statement \ref{T_2.1.3.}, $\hat \ell_{\Sigma_\sigma}^s \cong \mathbb{S}^2 \times \mathbb S^1 \sqcup \mathbb{S}^2 \times \mathbb S^1$.
\begin{figure}[!ht]
    \centering
    \includegraphics[width = 0.55\textwidth]{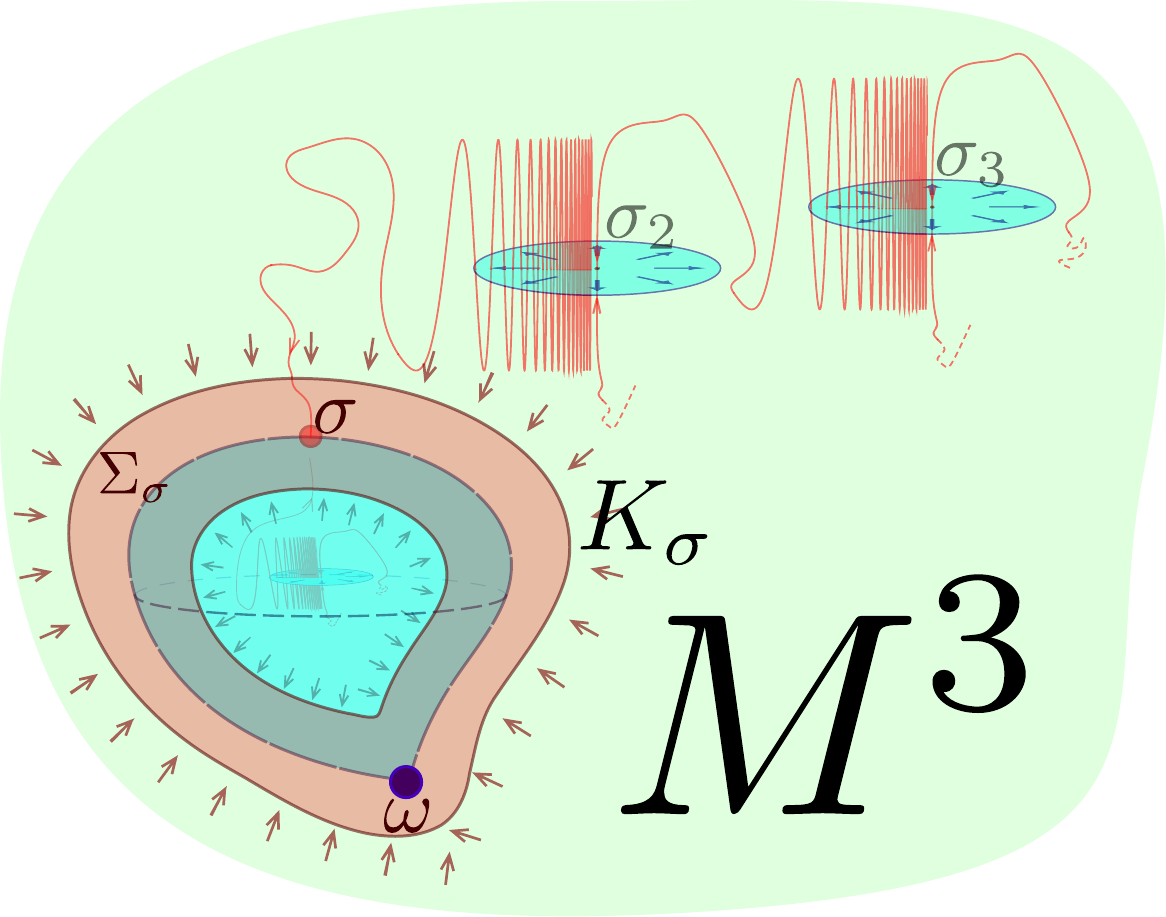}
    \caption{The dynamics of the diffeomorphism $f$ on $M^3$}
    \label{fig:attraction}
\end{figure}

    We denote by $\hat\ell_1,\,\hat\ell_2$ the connected components of the set $\hat \ell_{\Sigma_\sigma}^s$ and by $\eta_{\hat\ell_i}: \pi_1(\hat\ell_i)\to\mathbb Z,\,i=1,2$ the homomorphism induced by the cover $p_{\hat \ell_{\Sigma_\sigma}^s}$. Let us assume $\ell_i=p^{-1}_{\hat \ell_{\Sigma_\sigma}^s}(\hat\ell_i)$. It follows from the definition of the homomorphism $\eta_{\hat\ell_i}$ that it is an isomorphism, hence the set $\ell_i$ is connected. Let $\hat \ell = \ell/a$. Since the point $O$ is a sink of the three-dimensional map $a$, then, by Statement \ref{T_2.1.3.}, $\ell\cong \mathbb S^2 \times \mathbb S^1$ and the homomorphism $\eta_{\hat\ell}:\pi_1(\hat\ell)\to\mathbb Z$ is an isomorphism. Therefore, the manifolds $\hat\ell_i$ and $\hat\ell$ are homeomorphic smooth $3$-manifolds, hence there exists a diffeomorphism $\hat h_i:\hat\ell_i\to\hat\ell_i$ (see \cite{munkres}). Without loss of generality, we assume that $\eta_{\hat \ell} \hat h_i= \eta_{\hat{\ell}_i}$ (otherwise, one may consider its composition with a diffeomorphism $\theta:\mathbb S^2\times\mathbb S^1\to\mathbb S^2\times\mathbb S^1$, given by the formula  $\theta(s,r) = (s, -r)$). 

    By Statement \ref{cycle}, there exists a lift $h_i:\ell_i \to \ell$ of the diffeomorphism $\hat h_i$, smoothly conjugating $\left.f\right|_{\ell _i}$ with $\left.a\right|_{\ell}$.
\end{proof}

Now let $\bar M^\sigma =  \mathbb R^3 \sqcup \mathcal M_\sigma \sqcup \mathbb R^3$,  $M^\sigma =  \mathbb R^3 \cup_{h_1} \mathcal M_\sigma \cup_{h_2} \mathbb R^3$ and let $p_\sigma: \bar M^\sigma \to M^\sigma$ be the natural projection.

\begin{lem}\label{main} The space $M^\sigma$ consists of two connected components $M^{\sigma}_1,\,M^{\sigma}_2$ each of which is a closed smooth $3$-manifold such that $$M^3 = M^\sigma_1 \# M^\sigma_2.$$ Moreover, the manifold $M^{\sigma}_i,\,i=1,2$ admits a diffeomorhism $f_i:M^{\sigma}_i\to M^{\sigma}_i$ belonging to the class $\mathcal G$ and having less saddle points than $f$.
\end{lem}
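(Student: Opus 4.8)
The plan is to build $M^\sigma$ by the cut-and-paste dictated by Lemmas \ref{msi} and \ref{lel}, and then to recognize it as a connected sum. First I would analyze the local structure near $\Sigma_\sigma$: since $\ell_{\Sigma_\sigma}^s = \ell_1 \sqcup \ell_2$ and each $\ell_i$ is, by Lemma \ref{lel}, diffeomorphic via $h_i$ to $\ell = \mathbb R^3 \setminus O$ conjugating $f|_{\ell_i}$ with $a|_\ell$, the gluing $\mathbb R^3 \cup_{h_i} (\text{side of } \mathcal M_\sigma)$ simply fills in the puncture that $\Sigma_\sigma$ left behind on that side by a single point (the image of $O$). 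Concretely, a neighbourhood of $\Sigma_\sigma$ in $W^s_{\Sigma_\sigma}$ is $K_\sigma \cong \mathbb S^2\times[0,1]$ with $\Sigma_\sigma$ separating $\partial K_\sigma$ into two spheres $S_1, S_2$, each bounding (together with a copy of $\mathbb R^3$) a $3$-ball; so the local picture of the surgery is: remove an open collar neighbourhood of $\Sigma_\sigma$ from $M^3$, splitting it into two pieces-with-boundary $\mathbb S^2$, and cap each boundary sphere with a $3$-ball $\mathbb D^3$. Since $\Sigma_\sigma$ disconnects $\mathcal M_\sigma$ (Lemma \ref{msi}) into, say, $\mathcal M_1$ and $\mathcal M_2$, each $M^\sigma_i := \mathbb D^3 \cup (\mathrm{cl}\,\mathcal M_i) $ (more precisely $\mathbb R^3 \cup_{h_i} \mathrm{cl}(\mathcal M_i)$) is a closed smooth $3$-manifold, and $M^\sigma = M^\sigma_1 \sqcup M^\sigma_2$.

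Next I would verify $M^3 = M^\sigma_1 \# M^\sigma_2$. This is the reverse operation: each $M^\sigma_i$ contains the capping ball $\mathbb D^3_i$ (image of $\mathbb R^3$ compactified), and removing $\mathrm{int}\,\mathbb D^3_i$ from each and gluing the resulting boundary spheres together via the identification coming from $\Sigma_\sigma$ reconstructs $M^3$; the orientation-reversal condition in the definition of connected sum is arranged exactly as in the proof of Lemma \ref{lel} (composing with $\theta(s,r)=(s,-r)$ if needed, or noting that in the non-orientable/ambiguous case connected sum is still well-defined up to the choices already fixed). A subtle point: the sphere along which we cut, $\Sigma_\sigma$, is only known to be \emph{topologically} embedded and smooth away from $\omega$; here Statement \ref{NoWS} is what saves us, since it provides the genuinely smooth bicollar $K_\sigma$, so the connected-sum decomposition is smooth. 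I would phrase this carefully to make sure the smoothness of $M^\sigma_i$ and of the connected sum is justified by $K_\sigma$ rather than by $\Sigma_\sigma$ directly.

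Then I would construct the diffeomorphisms $f_i \in \mathcal G$. On $\mathrm{cl}(\mathcal M_i)$ the map $f$ is already defined and smooth; on the capping copy of $\mathbb R^3$ we use $a$; by Lemma \ref{lel} the conjugacy $h_i$ glues $f|_{\ell_i}$ to $a|_\ell$ across the interface, so $f_i := f \cup_{h_i} a$ is a well-defined diffeomorphism of $M^\sigma_i$. It is Morse-Smale: its non-wandering set is $(\Omega_f \cap \mathcal M_i) \cup \{O_i\}$ where $O_i$ is the new sink replacing $\Sigma_\sigma$; hyperbolicity and transversality are inherited on $\mathcal M_i$ and hold trivially at $O_i$. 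All saddles of $f_i$ are saddles of $f$, so they have a common unstable-manifold dimension, hence $f_i \in \mathcal G$. For the count: the saddle $\sigma$ and the sink $\omega$ both lie on $\Sigma_\sigma$, hence in neither $\mathcal M_i$, and are replaced collectively by the two sinks $O_1, O_2$; so $f_1$ and $f_2$ together have $k-1$ saddles against $f$'s $k$, and each individually has strictly fewer saddles than $f$ (since $k \geq 1$ and the saddles of $f$ other than $\sigma$ are distributed among the two components, each getting at most $k-1$). I should double check the edge case where one component, say $\mathcal M_2$, contains no saddle at all: then $f_2$ has empty saddle set, is a ``source--sink'' diffeomorphism by Statement \ref{NS_diff}, $M^\sigma_2 \cong \mathbb S^3$, and the decomposition degenerates to $M^3 \cong M^\sigma_1 \# \mathbb S^3 = M^\sigma_1$ — still consistent with the statement, and in fact the base case that drives the induction in Theorem \ref{result}.

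The main obstacle will be the careful bookkeeping of the surgery near the wild point $\omega$: one must pass from the topological sphere $\Sigma_\sigma$ to the smooth product neighbourhood $K_\sigma$, check that $f(K_\sigma) \subset \mathrm{int}\,K_\sigma$ lets $K_\sigma \setminus \mathrm{int}\,f(K_\sigma)$ serve simultaneously as a fundamental domain for both $\ell_1$ and $\ell_2$ (this is already done in Lemma \ref{lel}), and then confirm that the abstract gluing $\mathbb R^3 \cup_{h_i} \mathcal M_\sigma$ indeed produces a \emph{closed} manifold — i.e. that the one point added really compactifies the end of $\mathcal M_i$ corresponding to $\ell_i$, and no other ends are created. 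This amounts to showing the end of $M^3$ cut along $\Sigma_\sigma$ on the $\ell_i$ side is a product end $\mathbb S^2 \times [0,1)$, which follows from $\ell_i \cong \ell \cong \mathbb S^2 \times \mathbb S^1$ orbit space plus the conjugacy to $a$. Everything else (Morse-Smale axioms for $f_i$, the $\mathcal G$ membership, the saddle count) is routine once this topological core is nailed down.
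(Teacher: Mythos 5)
Your proposal is correct and follows essentially the same route as the paper: glue a copy of $\mathbb R^3$ onto each component of $\mathcal M_\sigma$ via the conjugacies $h_i$ from Lemma \ref{lel}, extend $f$ by the linear contraction $a$ using the pasting lemma, and note that the saddle $\sigma$ (absorbed into $\Sigma_\sigma$) disappears, so each $f_i$ has strictly fewer saddles than $f$. The paper is merely terser, invoking the definition of connected sum and the pasting lemma directly where you spell out the collar/capping picture near $\Sigma_\sigma$.
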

\begin{proof}   It follows from Lemma \ref{lel} that the manifold $\mathcal M_\sigma$ is a disjoint union of two manifolds, and hence the space $M^\sigma$ has exactly the same number of connected components, let us denote them as $M^{\sigma}_1$ and $M^{\sigma}_2$. 
   Since $h_i$ glues open subsets of $3$-manifolds, then the projection $p_\sigma$ induces the structure of a smooth $3$-manifold on $M^\sigma$. Since the glued manifolds have no boundary, the manifold $M^\sigma$ has no boundary as well.  Due to  the compactness of $M^3$, the manifold $M^\sigma$ is closed. Moreover, it follows directly from the definition of the connected sum that $M^3 = M^\sigma_1 \# M^\sigma_2$.
    
    According to [\cite{Mu2}, Theorem 18.3 (The pasting lemma)], the map $f_\sigma: M^\sigma \to M^\sigma$, defined by the formula
    \begin{align*}
    f_\sigma(x) = 
	\begin{cases}
   p_\sigma(f(p_\sigma^{-1}(x))), \mathit{if }~ x\in p_\sigma(\mathcal 	M^\sigma),\\
   p_\sigma(a(p_\sigma(x))), \mathit{if }~ x \in p_\sigma(\bar{M^\sigma} \setminus \mathcal M_\sigma),
	\end{cases}
    \end{align*}
    is a diffeomorphism. Let $f_i=f_\sigma|_{M_i^\sigma}$(see Fig. \ref{fig:M1}, \ref{fig:M2}).
    \begin{figure}[!ht]
        \begin{minipage}{0.48\textwidth}
        \includegraphics[width = 0.9\textwidth]{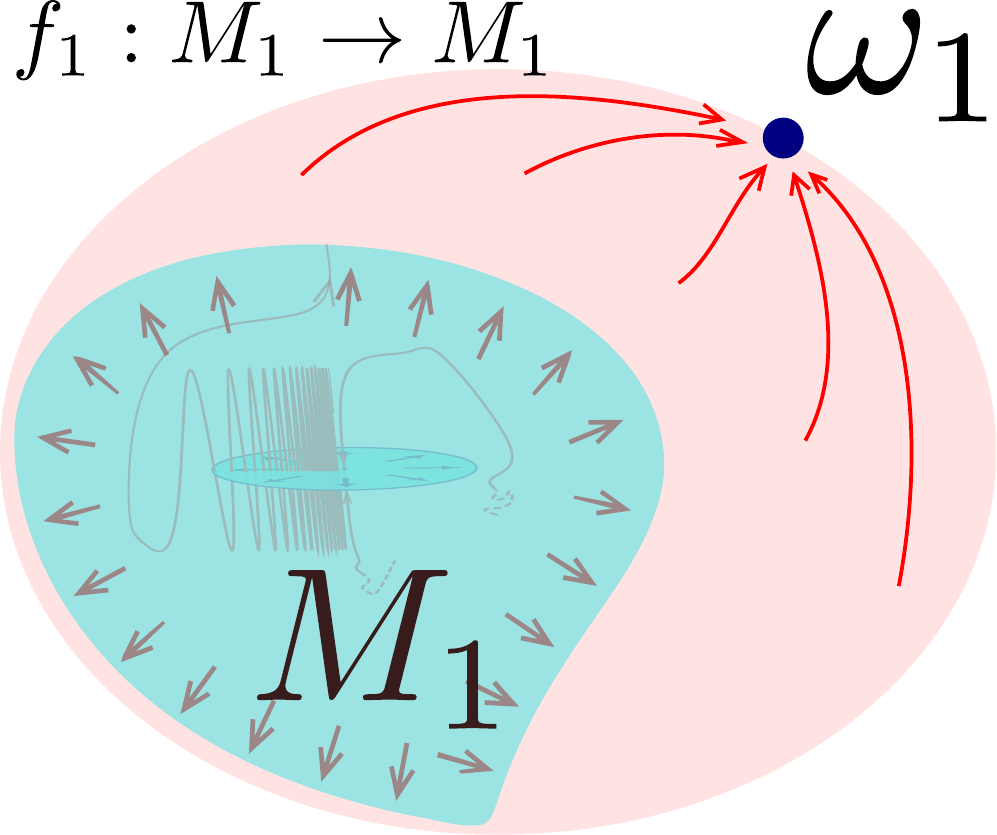}
                \caption{The dynamics of $f_1$ on $M_1^\sigma$}
                \label{fig:M1}
        \end{minipage}
        \hfill
        \begin{minipage}{0.48\textwidth}
        \includegraphics[width = 0.9\textwidth]{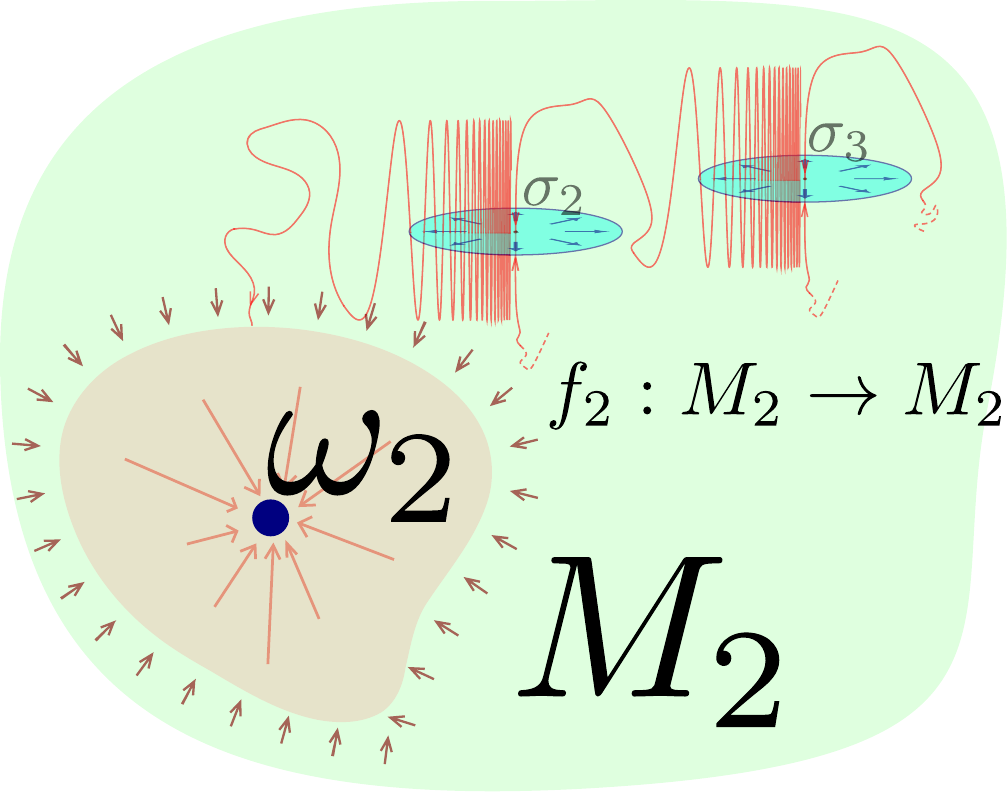}
                \caption{The dynamics of $f_2$ on $M_2^\sigma$}
                \label{fig:M2}
        \end{minipage}
        \end{figure}
     By the construction, the diffeomorphism $f_\sigma$ is smoothly conjugated with $f$ on $p_\sigma(\mathcal 	M^\sigma)$ and with $a$ on $p_\sigma(\bar{M^\sigma} \setminus \mathcal 	M^\sigma)$ ($O_i \,~ (i=1,2)$ is a point  conjugated with the fixed sink $O(0,0,0)$ of $a$). Hence, $f_\sigma\in\mathcal G$ and its non-wandering set have one saddle point less than the non-wandering set of the diffeomorphism $f$.
    \end{proof}

Now let us prove the main result of this paper.

{\bf Theorem 1. } {\it Any closed connected $3$-manifold $M^3$, admitting a diffeomorphism $f \in \mathcal G$, is homeomorphic to the $3$-sphere.} 

\begin{proof} 
    Let $f:M^3 \to M^3$ be from the class $\mathcal G$. Also, we assume that $f$ satisfies the Remark. 
    We prove  Theorem 1 by the  induction on the number $k$ of the saddle points of the diffeomorphism $f$.

    {\bf Base of induction. } $k=0$.
    
 It follows from Statement \ref{NS_diff} that $M^3$ is homeomorphic to the $3$-sphere.  

    {\bf Step of induction. } $k>0$.

    {\bf Inductive hypotheses. } {\it  Any diffeomorphism from the class $\mathcal G$, the number of saddle points in which is less than some natural number $k$, can be defined only on a manifold homeomorphic to the $3$-sphere.}

    The diffeomorphism $f: M^3 \to M^3$ lies in $\mathcal G$ and have exactly $k$ saddle points. Due to  Lemma \ref{sigma}, there exists a saddle $\sigma$ whose the unstable manifold has no heteroclinic intersections. This saddle was chosen according to the order \ref{order}.
     By Lemma \ref{main}, $M^3 = M^\sigma_1 \# M^\sigma_2$ and the manifold $M^{\sigma}_i,\,i=1,2$ admits a diffeomorphism $f_i:M^{\sigma}_i\to M^{\sigma}_i$ from the class $\mathcal G$ which have less saddle points than $f$.

    In this case, it follows from the inductive hypotheses that $M^\sigma_i \cong \mathbb S^3$. Thus, $M^3$ is a connected sum of the $3$-spheres and, consequently, $M^3\cong\mathbb S^3$. 
\end{proof}

\end{document}